\theoremstyle{plain}
\newtheorem{theorem}{Theorem}
\newtheorem{proposition}[theorem]{Proposition}
\newtheorem{lemma}[theorem]{Lemma}
\theoremstyle{plain}
\newtheorem{definition}[theorem]{Definition}
\newtheorem{remark}[theorem]{Remark}
\providecommand{\Supp}{\operatorname{supp}}                            
\providecommand{\supp}{\Supp}
\providecommand{\Id}{\Op{Id}}                     
\providecommand{\CC}{{\cal C}}
\providecommand{\CE}{{\cal E}}
\providecommand{\CN}{{\cal N}}
\providecommand{\CP}{{\cal P}}
\providecommand{\bbE}{\mathbb{E}}
\providecommand{\bbR}{\mathbb{R}}
\providecommand*{\abs}[1]{\left|{#1}\right|} 
\providecommand*{\N}[1]{\left\|{#1}\right\|} 
\providecommand*{\Nnormal}[1]{\|{#1}\|} 
\providecommand*{\abs}[1]{\left|{#1}\right|} 
\newcommand*{\Op}[1]{\mathsf{#1}} 
\newcommand{\ff}{G_W}
\newcommand{\bV}{\overbar{V}}
\newcommand{\gC}{C_{\revised{\kappa}}}
\newcommand{\cin}{C_{2}}
\newcommand{\cfm}{C_{3}}
\newcommand{\vrho}{v_{\alpha}(\rho_t)}
\newcommand{\rhoM}{\mathcal{P}_{v_b,R}\left(\vrho\right)}
\newcommand{\rhoMM}{\mathcal{P}_{v_b,R}\left(v_{\alpha}(\widehat{\rho}_t^N)\right)}
\newcommand{\normmsq}[1]{\left\| #1 \right\|_2^2}
\newcommand{\normm}[1]{\left\| #1 \right\|_2}
\newcommand{\inner}[2]{\left< #1 , #2 \right>}
\newcommand{\Ep}[1]{\mathbb{E}\left[#1\right]}
\newcommand{\Gwn}{G_{W,N,\revised{\kappa}}(t)}
\newcommand{\Gwno}{G_{W,N,\revised{\kappa}}(0)}
\newif\ifrevised
\newcommand{\revised}[1]{%
	\ifrevised
	\color{purple} #1 \color{black} %
	\else
	#1%
	\fi}
\newcommand{\overbar}[1]{\makebox[0pt]{$\phantom{#1}\mkern 1.5mu\overline{\mkern-1.5mu\phantom{#1}\mkern-1.5mu}\mkern 1.5mu$}#1}
\renewcommand{\underbar}[1]{\makebox[0pt]{$\phantom{#1}\mkern 1.5mu\underline{\mkern-1.5mu\phantom{#1}\mkern-1.5mu}\mkern 1.5mu$}#1}
\DeclareMathOperator*{\Law}{Law}
\newcommand{\overbarscript}[1]{\mkern 1.5mu\overline{\mkern-1.5mu#1\mkern-1.5mu}\mkern 0mu}
\newcommand{\globmin}{v^*}
\renewcommand{\CE}{f}
\newcommand{\minobj}{\underbar \CE}
\newcommand{\indivmeasure}[0]{\varrho} 
\newcommand{\empmeasure}[1]{\widehat\rho^N_{#1}}
\newcommand{\omegaa}[0]{\omega_{\alpha}}
\newcommand{\conspoint}[1]{v_{\alpha}({#1})}
\title{\usefont{OT1}{bch}{b}{n}
	\huge Consensus-Based Optimization\\with Truncated Noise\\
}
\date{}
\author[1,2,3]{Massimo Fornasier\thanks{Email: \texttt{massimo.fornasier@cit.tum.de} (corresponding author)}}
\author[4,5,6]{Peter Richt\'{a}rik\thanks{Email: \texttt{peter.richtarik@kaust.edu.sa}}}
\author[1,2]{Konstantin Riedl\thanks{Email: \texttt{konstantin.riedl@ma.tum.de}}}
\author[4,5]{Lukang Sun\thanks{Email: \texttt{lukang.sun@kaust.edu.sa} }}
\affil[1]{Technical University of Munich, School of Computation, Information and Technology, Department of Mathematics, Munich, Germany}
\affil[2]{Munich Center for Machine Learning, Munich, Germany}
\affil[3]{Munich Data Science Institute, Germany}
\affil[4]{King Abdullah University of Science and Technology, Thuwal, Saudi Arabia}
\affil[5]{KAUST AI Initiative, Thuwal, Saudi Arabia}
\affil[6]{SDAIA-KAUST Center of Excellence in Data Science and Artificial Intelligence, Thuwal, Saudi Arabia}
\begin{document}
	\maketitle

	\begin{abstract}
		\noindent
		Consensus-based optimization (CBO) is a versatile multi-particle metaheuristic optimization method suitable for performing nonconvex and nonsmooth global optimizations in high dimensions. It has proven effective in various applications while at the same time being amenable to a theoretical convergence analysis.
		In this paper, we explore a variant of CBO, which incorporates truncated noise in order to enhance the well-behavedness of the statistics of the law of the dynamics.
		By introducing this additional truncation in the noise term of the CBO dynamics, we achieve that, in contrast to the original version, higher moments of the law of the particle system can be effectively bounded.
		As a result, our proposed variant exhibits enhanced convergence performance, allowing in particular for wider flexibility in choosing the \revised{noise parameter} of the method as we confirm experimentally.
		By analyzing the time-evolution of the Wasserstein-$2$ distance between the empirical measure of the interacting particle system and the global minimizer of the objective function, we rigorously prove convergence in expectation  of the proposed CBO variant requiring only minimal assumptions on the objective function and on the initialization.
		Numerical evidences demonstrate the benefit of truncating the noise in CBO.
	\end{abstract}
	
	{\noindent\small{\textbf{Keywords:} global optimization, derivative-free optimization, nonsmoothness, nonconvexity, metaheuristics, consensus-based optimization, truncated noise}}\\
	
	{\noindent\small{\textbf{AMS subject classifications:} 65K10, 90C26, 90C56, 35Q90, 35Q84}}

	\section{Introduction}
	The search for a global minimizer~$\globmin$ of a potentially nonconvex and nonsmooth cost function $$\CE:\bbR^d\to\bbR$$ holds significant importance in a variety of applications throughout applied mathematics, science and technology, engineering, and machine learning.
	Historically, a class of methods known as metaheuristics~\cite{back1997handbook,blum2003metaheuristics} has been developed to address this inherently challenging and, in general, NP-hard problem.
	Examples of such include
	evolutionary programming~\cite{fogel2006evolutionary},
	genetic algorithms~\cite{holland1992adaptation},
	particle swarm optimization (PSO)~\cite{kennedy1995particle},
	simulated annealing~\cite{aarts1989simulated}, and many others.
	These methods work combining local improvement procedures and global strategies by orchestrating deterministic and stochastic advances, with the aim of creating a method capable of robustly and efficiently finding the globally minimizing argument~$\globmin$ of $\CE$.
	However, despite their empirical success and widespread adoption in practice, most metaheuristics lack a solid mathematical foundation that could guarantee their robust convergence to global minimizers under reasonable assumptions.
	
	Motivated by the urge to devise algorithms which converge provably, a novel class of metaheuristics, so-called consensus-based optimization (CBO), originally proposed by the authors of \cite{pinnau2017consensus}, has recently emerged in the literature.
	Due to the inherent simplicity in the design of CBO,
	this class of optimization algorithms lends itself to a rigorous theoretical analysis, as demonstrated in particular in the works \cite{carrillo2018analytical,carrillo2019consensus,ha2021convergence,ha2020convergenceHD,ko2022convergence,fornasier2021consensus,fornasier2021convergence}.
	However, this recent line of research does not just offer a promising avenue for establishing a thorough mathematical framework for understanding the numerically observed successes of CBO methods~\cite{carrillo2019consensus,fornasier2021convergence,fornasier2020consensus_sphere_convergence,riedl2022leveraging,carrillo2023fedcbo}, but beyond that allows to explain the effective use of conceptually similar and wide-spread methods such as PSO as well as at first glance completely different optimization algorithms such as stochastic gradient descent (SGD).
	While the first connection is to be expected and by now made fairly rigorous~\cite{grassi2020particle,cipriani2021zero,qiu2022PSOconvergence} due to CBO indisputably taking PSO as inspiration, the second observation is somewhat surprising, as it builds a bridge between derivative-free metaheuristics and gradient-based learning algorithms.
	Despite CBO solely relying on evaluations of the objective function, recent work~\cite{riedl2023gradient} reveals an intrinsic SGD-like behavior of CBO itself by interpreting it as a certain stochastic relaxation of gradient descent, which provably overcomes energy barriers of nonconvex function.
	These perspectives, and, in particular the already well-investigated convergence behavior of standard CBO, encourage the exploration of improvements to the method in order to allow  overcoming the limitations of traditional metaheuristics mentioned at the start.
	For recent surveys on CBO we refer to \cite{grassi2021mean,totzeck2021trends}.
	
	While the original CBO model~\cite{pinnau2017consensus} has been adapted to solve
	constrained optimizations~\cite{borghi2021constrained,carrillo2021consensus,bae2022constrained}, optimizations on manifolds~\cite{fornasier2020consensus_hypersurface_wellposedness,fornasier2020consensus_sphere_convergence,fornasier2021anisotropic,kim2020stochastic,ha2021emergent},
	multi-objective optimization problems~\cite{borghi2022consensus,borghi2022adaptive,klamroth2022consensus},
	saddle point problems~\cite{huang2022consensus} or the task of sampling~\cite{carrillo2022consensus},
	as well as has been extended to make use of memory mechanisms~\cite{totzeck2020consensus,riedl2022leveraging,borghi2023consensus}, gradient information~\cite{riedl2022leveraging,schillings2022Ensemble}, momentum~\cite{chen2020consensus}, jump-diffusion processes~\cite{kalise2022consensus} or localization kernels for polarization~\cite{bungert2022polarized},
	we focus in this work on a variation of the original model, which incorporates a truncation in the noise term of the dynamics.
	More formally, given a time horizon~$T>0$, a time discretization $t_0 = 0 < \Delta t < \cdots< K \Delta t = t_K = T$ of $[0,T]$, and user-specified parameters~$\alpha,\lambda,\sigma>0$ as well as $v_b,R>0$, we consider the interacting particle system
	\begin{alignat}{2} \label{eq:17}
		\phantom{V_{\revised{{k+1,\Delta t}}}^i - V_{\revised{{k,\Delta t}}}^i} &\begin{aligned}[c]
			\mathllap{V^i_{\revised{{k+1,\Delta t}}} - V^i_{\revised{{k,\Delta t}}}} = &- \Delta t\lambda\left( V^i_{\revised{{k,\Delta t}}} - \mathcal{P}_{v_b,R}\left(\conspoint{\empmeasure{\revised{{k,\Delta t}}}}\right)\right)
			+ \sigma \left(\N{ V^i_{\revised{{k,\Delta t}}}-\conspoint{\empmeasure{\revised{{k,\Delta t}}}}}_2 \wedge M\right) B^i_{\revised{{k,\Delta t}}},\\
		\end{aligned} \\
		\mathllap{V_0^i} &\sim \rho_0 \quad \text{for all } i =1,\ldots,N,
	\end{alignat}
	where $(( B^i_{\revised{{k,\Delta t}}})_{k=0,\ldots,K-1})_{i=1,\ldots,N}$ are independent, identically distributed Gaussian random vectors in $\bbR^d$ with zero mean and covariance matrix $\Delta t \Id_d$.
	Equation~\eqref{eq:17} originates from a simple Euler-Maruyama time discretization~\cite{higham2001algorithmic,platen1999introduction} of the system of stochastic differential equations~(SDEs), expressed in It\^o's form as
	\begin{alignat}{2} \label{eq:5}
		d V^i_t &= -\lambda\left(V^i_t-\rhoMM\right)dt+\sigma\left(\normm{V^i_t-v_{\alpha}(\empmeasure{t})}\wedge M\right)dB^i_t \\
		\mathllap{V_0^i} &\sim \rho_0 \quad \text{for all } i =1,\ldots,N.
	\end{alignat}
	where $((B_t^i)_{t\geq 0})_{i = 1,\ldots,N}$ are now independent standard Brownian motions in $\bbR^d$.
	The empirical measure of the particles at time~$t$ is denoted by 
	$\empmeasure{t} := \frac{1}{N} \sum_{i=1}^{N} \delta_{V_t^i}$.
    Moreover, $\mathcal{P}_{v_b,R}$ is \revised{the projection onto $B_R(v_b)$ defined as}
	\begin{equation}\label{defproj}
		\mathcal{P}_{v_b,R}\left(v\right):=
		\begin{cases}
			v,
			& \text{if $\N{v-v_b}_2\leq R$},\\
			v_b+R\frac{v-v_b}{\N{v-v_b}_2},
			& \text{if $\N{v-v_b}_2>R$}.
		\end{cases}
	\end{equation}
    As a crucial assumption in this paper, the map $\mathcal{P}_{v_b,R}$ depends on $R$ and $v_b$ in such way that $\globmin\in B_R(v_b)$.
	Setting the parameters can be feasible under specific circumstances, as exemplified by the regularized optimization problem $f(v):=\operatorname{Loss}(v)+\revised{\Lambda}\normm{v}$, wherein $\globmin\in B_{\operatorname{Loss}(0)/\revised{\Lambda}}(0)$.
	In the absence of prior knowledge regarding $v_b$ and $R$, a practical approach is to choose $v_b=0$ and assign a sufficiently large value to $R$.
	The first terms in \eqref{eq:17} and \eqref{eq:5}, respectively, impose a deterministic drift of each particle towards the possibly projected momentaneous consensus point $\conspoint{\empmeasure{t}}$, which is a weighted average of the particles' positions and computed according to
	\begin{align} \label{eq:momentaneous_consensus}
		\conspoint{\empmeasure{t}} := \int v \frac{\omegaa(v)}{\N{\omegaa}_{L_1(\empmeasure{t})}}\,d\empmeasure{t}(v).
	\end{align}
	The weights $ \omega_{\alpha}(v):=\exp(-\alpha \CE(v))$ are motivated by the well-known Laplace principle~\cite{dembo2009large}, which states for any absolutely continuous probability distribution $\varrho$ on $\mathbb{R}^d$ that
	\begin{align} \label{eq:laplace_principle}
		\lim\limits_{\alpha\rightarrow \infty}\left(-\frac{1}{\alpha}\log\left(\int\omegaa(v)\,d\indivmeasure(v)\right)\right) = \inf\limits_{v \in \supp(\indivmeasure)}\CE(v)
	\end{align}
	and thus justifies that $\conspoint{\empmeasure{t}}$ serves as a suitable proxy for the global minimizer~$\globmin$ given the currently available information of the particles~$(V^i_t)_{i=1,\dots,N}$.
	The second terms in \eqref{eq:17} and \eqref{eq:5}, respectively, encode the diffusion or exploration mechanism of the algorithm, where, in contrast to standard CBO, we truncate the noise by some fixed constant~$M>0$.
	
	We conclude and re-iterate that both the introduction of the projection $\rhoMM$ of the consensus point and the employment of truncation of the noise variance $\left(\normm{V^i_t-v_{\alpha}(\empmeasure{t})}\wedge M\right)$ are main innovations to the original CBO method. We shall explain and justify these modifications in the following paragraph. 
 
	\revised{Despite these technical improvements, the approach to analyze the convergence behavior of the implementable scheme~\eqref{eq:17} follows a similar route already explored in \cite{carrillo2018analytical,carrillo2019consensus,fornasier2021consensus,fornasier2021convergence}.
    In particular, the convergence behavior of the method to the global minimizer~$v^*$ of the objective~$f$ is investigated on the level of the mean-field limit~\cite{huang2021MFLCBO,fornasier2021consensus} of the system~\eqref{eq:5}.
	More precisely, we study the macroscopic behavior of the agent density~$\rho\in\CC([0,T],\CP(\bbR^d))$, where $\rho_t=\Law(\overbar{V}_t)$ with
	\begin{equation}\label{eq:1}
		d\overbar{V}_t=-\lambda\left(\overbar{V}_t-\rhoM\right)dt+\sigma\left(\N{\overbar{V}_t-\vrho}_2\wedge M\right)d B_t
	\end{equation}
    and initial data $\overbar{V}_0\sim\rho_0$.
    Afterwards, by establishing a quantitative estimate on the mean-field approximation, i.e., the proximity of the mean-field system \eqref{eq:1} to the interacting particle system~\eqref{eq:5} and combining the two results, we obtain a convergence result for the CBO algorithm~\eqref{eq:17} with truncated noise.}

	\paragraph{Motivation for using truncated noise.}
	In what follows we provide a heuristic explanation of the theoretical benefits of employing a truncation in the noise of CBO as in~\eqref{eq:17}, \eqref{eq:5} and \eqref{eq:1}.
	Let us therefore first recall that the standard variant of CBO~\cite{pinnau2017consensus} can be retrieved from the model considered in this paper by setting $v_b=0$, $ R=\infty$ and $M=\infty$.
	For instance, in place of the mean-field dynamics \eqref{eq:1}, we would have 
	\begin{equation*}
		d \overbar{V}^\text{CBO}_t
		=-\lambda\left(\overbar{V}^\text{CBO}_t-\conspoint{\rho^\text{CBO}_t}\right)dt+\sigma\N{\overbar{V}^\text{CBO}_t-\conspoint{\rho^\text{CBO}_t}}_2 d B_t.
	\end{equation*}
	Attributed to the Laplace principle~\eqref{eq:laplace_principle} it holds $\conspoint{\rho^{\text{CBO}}_t}\approx\globmin$ for $\alpha$ sufficiently large, i.e., as $\alpha\rightarrow\infty$, the former dynamics converges to 
	\begin{equation}\label{eq:33}
		d \overbar{Y}^\text{CBO}_t=-\lambda\left(\overbar{Y}^\text{CBO}_t-\globmin\right)dt+\sigma\N{\overbar{Y}^\text{CBO}_t-\globmin}_2d B_t.
	\end{equation}
	Firstly, observe that here the first term imposes a direct drift to the global minimizer~$\globmin$ and thereby induces a contracting behavior, which is on the other hand counteracted by the diffusion term, which contributes a stochastic exploration around this point.
	In particular, with $\overbar{Y}^\text{CBO}_t$ approaching $\globmin$, the exploration vanishes so that $\overbar{Y}^\text{CBO}_t$ converges eventually deterministically to $\globmin$.
	Conversely, as long as $\overbar{Y}^\text{CBO}_t$ is far away from $\globmin$, the order of the random exploration is strong.
	By It\^o's formula we have 
	\begin{equation*}
		\frac{d}{dt}\Ep{\N{\overbar{Y}^\text{CBO}_t-\globmin}_2^p}
		=p\left(-\lambda+\frac{\sigma^2}{2}\left(p+d-2\right)\right)\Ep{\N{\overbar{Y}^\text{CBO}_t-\globmin}_2^p}
	\end{equation*}
	and thus 
	\begin{equation}\label{eq:8}
		\Ep{\N{\overbar{Y}^\text{CBO}_t-\globmin}_2^p}
		= \exp\left(p\left(-\lambda+\frac{\sigma^2}{2}\left(p+d-2\right)\right)t\right) \Ep{\N{\overbar{Y}^\text{CBO}_0-\globmin}_2^p}
	\end{equation}
	for any $p\geq 1$.
	Denoting with $\mu^\text{CBO}_t$ the law of $\overbar{Y}^\text{CBO}_t$, this means that, given any $\lambda,\sigma>0$, there is some threshold exponent $p^*=p^*(\lambda,\sigma,d)$, such that
	\begin{equation*}
		\begin{split}
			\revised{\lim_{t\to\infty}} W_p\left(\mu^\text{CBO}_t,\delta_{\globmin}\right)
			&=\lim_{t\to\infty}\left(\Ep{\N{\overbar{Y}^\text{CBO}_t-\globmin}_2^p}\right)^{1/p} \\
			&=\lim_{t\to\infty}\exp\left(\left(-\lambda+\frac{\sigma^2}{2}\left(p+d-2\right)\right)t\right)\left(\Ep{\N{\overbar{Y}^\text{CBO}_0-\globmin}_2^p}\right)^{1/p}\\
			&=0
		\end{split}
	\end{equation*}
	for $p<p^*$,
	while for $p>p^*$ it holds
	\begin{equation*}
		\begin{split}
			\revised{\lim_{t\to\infty}} W_p\left(\mu^\text{CBO}_t,\delta_{\globmin}\right)
			&=\lim_{t\to\infty}\left(\Ep{\N{\overbar{Y}^\text{CBO}_t-\globmin}_2^p}\right)^{1/p}\\
			&=\lim_{t\to\infty}\exp\left(\left(-\lambda+\frac{\sigma^2}{2}\left(p+d-2\right)\right)t\right)\left(\Ep{\N{\overbar{Y}^\text{CBO}_0-\globmin}_2^p}\right)^{1/p}\\
			&=\infty.
		\end{split}
	\end{equation*}
    \revised{Recalling that the distribution of a random variable~$Y$ has heavy tails if and only if the moment generating function $M_Y(s):=\bbE\left[\exp(sY)\right] =\bbE\left[\sum_{p=0}^\infty (sY)^p/p!\right]$ is infinite for all $s>0$,}
	these computations suggest that the distribution of $\mu^\text{CBO}_t$ exhibits characteristics of heavy tails \revised{as $t\to\infty$,} thereby increasing the likelihood of encountering outliers in a sample drawn from $\mu^\text{CBO}_t$ \revised{for large $t$.}
	
	On the contrary, for CBO with truncated noise~\eqref{eq:1}, we get, thanks once again to the Laplace principle as $\alpha\rightarrow\infty$, that \eqref{eq:1} converges to
	\begin{equation}\label{eq:1111}
		d\overbar{Y}_t
		=-\lambda\left(\overbar{Y}_t-\globmin\right)dt+\sigma\left(\N{\overbar{Y}_t-\globmin}_2\wedge M\right)dB_t,
	\end{equation}
	for which we can compute
	\begin{equation*}
		\begin{aligned}
			\frac{d}{dt}\Ep{\N{\overbar{Y}_t-\globmin}_2^p}
			&\leq -p\lambda\Ep{\N{\overbar{Y}_t-\globmin}_2^p}+p\frac{\sigma^2}{2}M^2\left(p+d-2\right)\Ep{\N{\overbar{Y}_t-\globmin}_2^{p-2}}\\
			&\leq -\lambda\Ep{\N{\overbar{Y}_t-\globmin}_2^p}+\lambda\frac{\sigma^pM^p(d+p-2)^{\frac{p}{2}}}{\lambda^{\frac{p}{2}}},
		\end{aligned}
	\end{equation*}
	for any $p\geq2$.
	Notice, that to obtain the second inequality we used Young's inequality\footnote{Choose $a=\lambda^{\frac{p-2}{p}} \mathbb{E}\left[\left\|\overbar{Y}_t-v^*\right\|^{p-2}_{\revised{2}}\right]$ and $b=\frac{\sigma^2 M^2(d+p-2)}{\lambda^{{(p-2)/p}}}$, and recall that $a b \leq \frac{p-2}{p} a^{\frac{p}{p-2}}+\frac{2}{p} b^{\frac{p}{2}}$.}
    as well as Jensen's inequality.
	By means of Gr\"onwall's inequality, we then have
	\begin{equation} \label{eq:12}
		\Ep{\N{\overbar{Y}_t-\globmin}_2^p}
		\leq \exp\left(-\lambda t\right) \Ep{\N{\overbar{Y}_0-\globmin}_2^p} + \frac{\sigma^p M^p(d+p-2)^{\frac{p}{2}}}{\lambda^{\frac{p}{2}}}
	\end{equation}
	and therefore, denoting with $\mu_t$ the law of $\overbar{Y}_t$,
	\begin{equation*}
		\lim_{t\to\infty}W_p\left(\mu_t,\delta_{\globmin}\right)\leq \frac{\sigma M\sqrt{d+p-2}}{\lambda^{\frac{1}{2}}}<\infty
	\end{equation*}
	for any $p\geq2$.
	
	In conclusion, we observe from Equation~\eqref{eq:8} that the standard CBO dynamics as described in \Cref{eq:33} diverges in the setting $\sigma^2d>2\lambda$ when considering the Wasserstein-$2$ distance~$W_2$.
	Contrarily, according to Equation~\eqref{eq:12}, the CBO dynamics with truncated noise as presented in Equation~\eqref{eq:1111} converges with exponential rate towards a neighborhood of $\globmin$, with radius $\sigma M\sqrt{d}/\sqrt{\lambda}$.
	This implies that for a relatively small value of $M$ the CBO dynamics with truncated noise exhibits greater robustness in relation to the parameter $\sigma^2d/\lambda$.
	This effect is confirmed numerically in Figure~\ref{fig:truncated_CBO_VS_CBO}.
	
	\begin{remark}[Sub-Gaussianity of truncated CBO]
		An application of It\^o's formula
		allows to show that, for some $\revised{\kappa}>0$, $\Ep{\exp\left(\normmsq{\overbarscript{Y}_t-\globmin}/\revised{\kappa}^2\right)}<\infty$, provided $\Ep{\exp\left(\normmsq{\overbarscript{Y}_0-\globmin}/\revised{\kappa}^2\right)}<\infty$. 
		Thus, by incorporating a truncation in the noise term of the CBO dynamics, we ensure that the  resulting distribution $\mu_t$ exhibits sub-Gaussian behavior
		and therefore we enhance the regularity and well-behavedness of the statistics of $\mu_t$.
		As a consequence, more reliable and stable results when analyzing the properties and characteristics of the dynamics are to be expected.
	\end{remark}

	\paragraph{Contributions.}
	In view of the aforementioned enhanced regularity and well-behavedness of the statistics of CBO with truncated noise compared to standard CBO~\cite{pinnau2017consensus} together with the numerically observed improved performance as depicted in Figure~\ref{fig:truncated_CBO_VS_CBO},
	a rigorous convergence analysis of the implementable CBO algorithm with truncated noise as given in \eqref{eq:17} is of theoretical interest.
	In this work we provide  theoretical guarantees of global convergence of \eqref{eq:17} to the global minimizer~$\globmin$ for possibly nonconvex and nonsmooth objective functions~$\CE$.
    The approach to analyze the convergence behavior of the implementable scheme~\eqref{eq:17} follows a similar route as initiated and explored by the authors of \cite{carrillo2018analytical,carrillo2019consensus,fornasier2021consensus,fornasier2021convergence}.
    In particular, we first investigate the mean-field behavior \eqref{eq:1} \revised{of the system~\eqref{eq:5}.}
	Then, by establishing a quantitative estimate on the mean-field approximation, i.e., the proximity of the mean-field system \eqref{eq:1} to the interacting particle system~\eqref{eq:5}, we obtain a convergence result for the CBO algorithm~\eqref{eq:17} with truncated noise. Our proving technique nevertheless differs in crucial parts from the one in \cite{fornasier2021consensus,fornasier2021convergence} as, on the one side, we do take advantage of the truncations, and, on the other side, we require additional technical effort to exploit and deal with the enhanced flexibility of the truncated model. Specifically, the central novelty can be identified in the proof of sub-Gaussianity of the process, see Lemma \ref{thm:2}.
	\begin{figure}[H]
		\centering
		\begin{subfigure}[t]{0.47\textwidth}
			\centering
			\includegraphics[trim=54 218 65 236,clip,height=0.2\textheight]{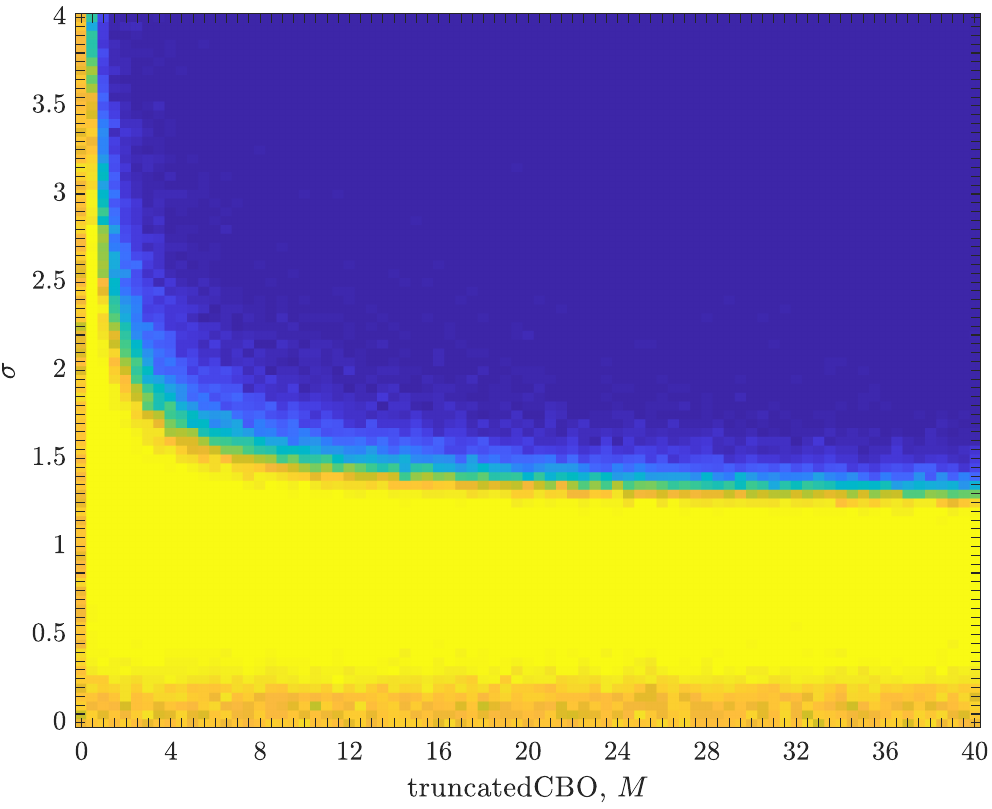}%
			\vspace{0.06cm}
			\includegraphics[trim=260 218 292 236,clip,height=0.2\textheight]{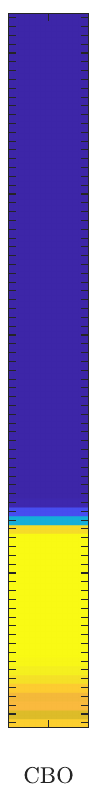}
			\vspace{0.06cm}
			\includegraphics[trim=488 218 65 236,clip,height=0.2\textheight]{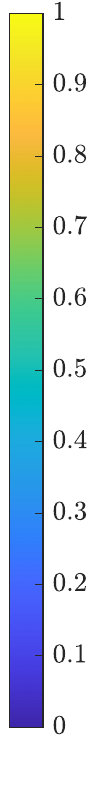}
			\caption{Phase diagram of success probabilities of isotropic CBO with and without truncated noise at the example of the Ackley function $\CE(v) = -20\exp\big(\!-\!{0.2}/{\sqrt{d}}\N{v}_2\big) - \exp\left({1}/{d}\sum_{k=1}^d \cos(2\pi v_k)\right)$ with $d=4$}
			\label{fig:truncated_CBO_VS_CBO_Ackley}
		\end{subfigure}~\hspace{1em}~
		\begin{subfigure}[t]{0.47\textwidth}
			\centering
			\includegraphics[trim=54 218 65 236,clip,height=0.2\textheight]{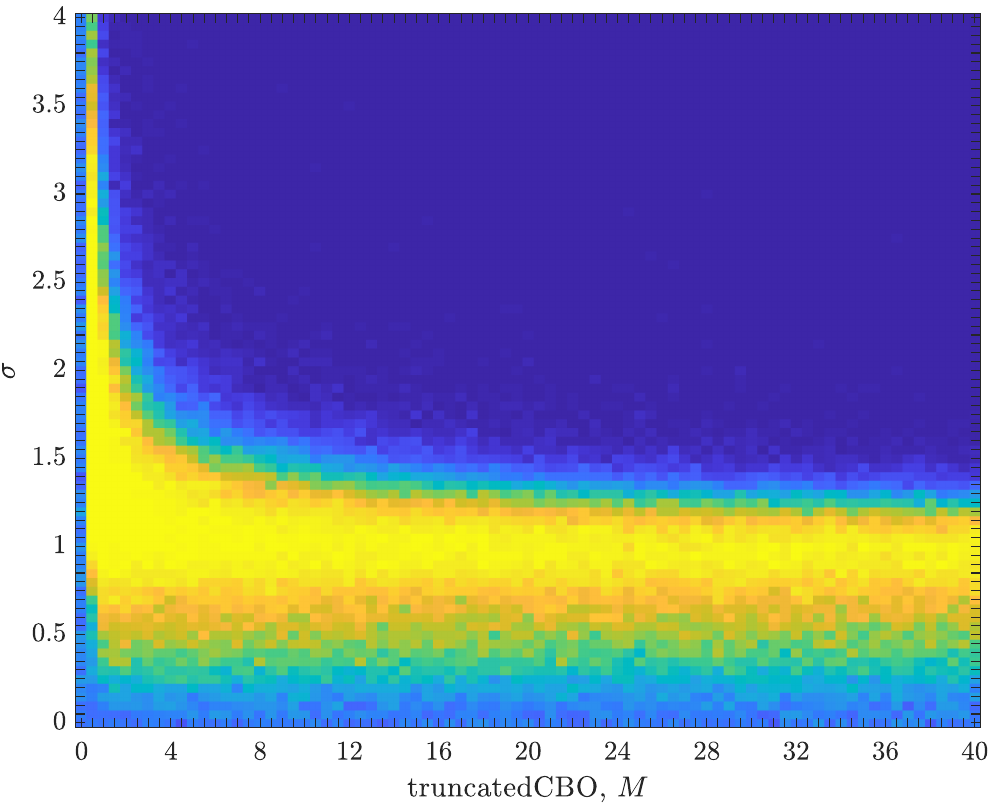}%
			\vspace{0.06cm}
			\includegraphics[trim=260 218 292 236,clip,height=0.2\textheight]{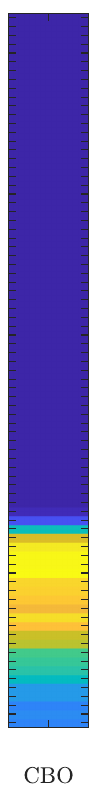}
			\vspace{0.06cm}
			\includegraphics[trim=488 218 65 236,clip,height=0.2\textheight]{colorbar.pdf}
			\caption{Phase diagram of success probabilities of isotropic CBO with and without truncated noise at the example of the Rastrigin function $\CE(v) = \sum_{k=1}^d v_k^2 + 2.5 \big(1-\cos(2\pi v_k)\big)$ with $d=4$}
			\label{fig:truncated_CBO_VS_CBO_Rastrigin}
		\end{subfigure}
		\caption{\footnotesize A comparison of the success probabilities of isotropic CBO with (left phase diagrams) and without (right separate columns) truncated noise for different values of the truncation parameter~$M$ and the noise level~$\sigma$.
			(Note that standard CBO as investigated in \cite{pinnau2017consensus,carrillo2018analytical,fornasier2021consensus} is retrieved when choosing $M=\infty$, $R=\infty$ and $v_b=0$ in \eqref{eq:17}).
			In both settings \textbf{(a)} and \textbf{(b)} the depicted success probabilities are averaged over $100$ runs and the implemented scheme is given by an Euler-Maruyama discretization of Equation~\eqref{eq:5} with time horizon~$T=50$, discrete time step size~$\Delta t=0.01$, $R=\infty$, $v_b=0$, $\alpha=10^5$ and $\lambda=1$.
			We use $N=100$ particles, which are initialized according to $\rho_0=\CN((1,\dots,1),2000)$.
			In both figures we plot the success probability of standard CBO (right separate column) and the CBO variant with truncated noise (left phase transition diagram) for different values of the truncation parameter~$M$ and the noise level $\sigma$, when optimizing the Ackley (\textbf{(a)}) and Rastrigin (\textbf{(b)}) function, respectively.
			\revised{We observe that truncating the noise term (by decreasing $M$) consistently allows for a wider flexibility when choosing the noise level $\sigma$ and thus increasing the likelihood of successfully locating the global minimizer.}}
		\label{fig:truncated_CBO_VS_CBO}
	\end{figure}
	
	\subsection{Organization} \label{subsec:organization}
	In Section~\ref{sec:main_result} we present and discuss our main theoretical contribution about the global convergence of CBO with truncated noise in probability and expectation.
	Section~\ref{sec:proofs} collects the necessary proof details for this result.
	In Section~\ref{sec:numerics} we numerically demonstrate the benefits of using truncated noise, before we provide a conclusion of the paper in Section~\ref{sec:conclusions}.
	For the sake of reproducible research, in the GitHub repository \url{https://github.com/KonstantinRiedl/CBOGlobalConvergenceAnalysis} we provide the Matlab code implementing CBO with truncated noise.

	\subsection{Notation} \label{subsec:notation}
	We use $\N{\,\cdot\,}_2$ to denote the Euclidean norm on $\mathbb{R}^d$.
	Euclidean balls are denoted as $B_{r}(u) \!:=\! \{v \in \bbR^d: \Nnormal{v-u}_2 \leq r\}$.
	For the space of continuous functions~$f:X\rightarrow Y$ we write $\CC(X,Y)$, with $X\subset\bbR^n$ and a suitable topological space $Y$.
	For an open set $X\subset\bbR^n$ and for $Y=\bbR^m$ the spaces~$\CC^k_{c}(X,Y)$ and~$\CC^k_{b}(X,Y)$ contain functions~$f\in\CC(X,Y)$ that are $k$-times continuously differentiable and have compact support or are bounded, respectively.
	We omit $Y$ in the real-valued case.
	All stochastic processes are considered on the probability space $\left(\Omega,\mathscr{F},\mathbb{P}\right)$.
	The main objects of study are laws of such processes, $\rho\in\CC([0,T],\CP(\bbR^d))$, where the set $\CP(\bbR^d)$ contains all Borel probability measures over $\bbR^d$.
	With $\rho_t\in\CP(\bbR^d)$ we refer to a snapshot of such law at time~$t$.
	Measures~$\indivmeasure \in \CP(\bbR^d)$ with finite $p$-th moment $\int \Nnormal{v}_2^p\,d\indivmeasure(v)$ are collected in $\CP_p(\bbR^d)$.
	For any $1\leq p<\infty$, $W_p$ denotes the \mbox{Wasserstein-$p$} distance between two Borel probability measures~$\indivmeasure_1,\indivmeasure_2\in\CP_p(\bbR^d)$, see, e.g., \cite{savare2008gradientflows}. $\Ep{\cdot}$ denotes the expectation.

	\section{Global Convergence of CBO with Truncated Noise} \label{sec:main_result}
	
	We now present the main theoretical result of this work about the global convergence of CBO with truncated noise for objective functions that satisfy the following conditions.
	
	\begin{definition}[Assumptions] \label{def:assumptions}
		Throughout we are interested in functions $\CE \in \CC(\bbR^d)$, for which
		\begin{enumerate}[label=A\arabic*,labelsep=10pt,leftmargin=35pt]
			\item\label{asp:11} there exist $\globmin\in\bbR^d$ such that $\CE(\globmin)=\inf_{v\in\bbR^d} \CE(v)=:\underbar{\CE}$ and $\underbar{\alpha},L_u>0$ such that
			\begin{align}\label{eq:LU}
				\sup_{v\in\mathbb{R}^d}\normm{ve^{-\alpha (f(v)-\underline{f})}}=:L_u<\infty
			\end{align}
			for any $\alpha\geq\underbar{\alpha}$ and any $v\in\mathbb{R}^d$,
			\item\label{asp:22} there exist $\CE_{\infty},R_0,\nu,L_\nu > 0$ such that
			\begin{align}
				\label{eq:asm_icp_vstar}
				\N{v-\globmin}_2 &\leq \frac{1}{L_\nu}(\CE(v)-\minobj)^{\nu} \quad \text{ for all } v \in B_{R_0}(\globmin),\\
				\CE_{\infty} &< \CE(v)-\minobj\quad \text{ for all } v \in \big(B_{R_0}(\globmin)\big)^c,
			\end{align}
			\item\label{asp:33} there exist $L_{\gamma}>0,\gamma\in [0,1]$ such that
			\begin{align}
				\abs{f(v)-f(w)} &\leq L_{\gamma}(\normm{v-\globmin}^{\gamma}+\normm{w-\globmin}^{\gamma})\normm{v-w} \quad \text{ for all } v, w \in \mathbb{R}^d,\\
				f(v)-\underline{f} &\leq L_{\gamma}\left(1+\normm{v-\globmin}^{1+\gamma}\right) \quad \text{ for all } v \in \mathbb{R}^d.
			\end{align}
		\end{enumerate}
	\end{definition}
    
    \noindent
    A few comments are in order:
    Condition \ref{asp:11} establishes the existence of a minimizer $\globmin$ and requires a certain growth of the function $f$.
    Condition \ref{asp:22} ensures that the value of the function $f$ at a point $v$ can locally be an indicator of the distance between $v$ and the minimizer $\globmin$.
    This error bound condition was first introduced in \cite{fornasier2021consensus} under the name inverse continuity condition.
    \revised{It in particular guarantees the uniqueness of the global minimizer~$\globmin$.}
    Condition \ref{asp:33} sets controllable bounds on the local Lipschitz constant of $f$ and on the growth of $f$, which is required to be at most quadratic. A similar requirement appears also in \cite{carrillo2018analytical,fornasier2021consensus}, but there also a quadratic lower bound was imposed.



	\subsection{Main Result}
	
	We can now state the main result of the paper.
	Its proof is deferred to Section~\ref{sec:proofs}.
	
	\begin{theorem}\label{thm:main1}
		Let $\CE \in \CC(\bbR^d)$ satisfy \ref{asp:11}, \ref{asp:22} and \ref{asp:33}.
		Moreover, let $\rho_0\in\CP_4(\bbR^d)$ with $\globmin\in\supp(\rho_0)$.
		Let $V^i_{\revised{0,\Delta t}}$ be sampled i.i.d.\@ from $\rho_0$ and denote by $((V^i_{\revised{{k,\Delta t}}})_{k=1,\dots,K})_{i=1,\dots,N}$ the iterations generated by the numerical scheme~\eqref{eq:17}.
		Fix any $\epsilon\in(0,W_2^2\left(\rho_0,\delta_{\globmin}\right))$, define the time horizon
		\begin{align*}
			T^*:=\frac{1}{\lambda}\log\left(\frac{2W_2^2\left(\rho_0,\delta_{\globmin}\right)}{\epsilon}\right)
		\end{align*}
		and let $K \in \mathbb{N}$ and $\Delta t$ satisfy ${{K\Delta t}}=T^*$.
		Moreover, let $R\in \big(\!\normm{v_b-\globmin}+\sqrt{\epsilon/2},\infty\big)$, $M\in (0,\infty)$ and $\lambda,\sigma>0$ be such that $\lambda\geq 2\sigma^2d$ or $\sigma^2M^2d=\mathcal{O}(\epsilon)$.
		Then, by choosing $\alpha$ sufficiently large and $N\geq(16\alpha L_{\gamma}\sigma^2M^2)/\lambda$, it holds 
 \begin{equation}\label{eq:7}
			\Ep{\normmsq{\frac{1}{N} \sum_{i=1}^N V^i_{\revised{K, \Delta t}}-\globmin}}
			\revised{\lesssim} C_{\mathrm{NA}}(\Delta t)^{2m}+\frac{C_{\mathrm{MFA}}}{N}+\epsilon
		\end{equation}
        \revised{up to a generic constant.}
        Here, $C_{\mathrm{NA}}$ depends linearly on the dimension $d$ and the number of particles $N$ and exponentially on the time horizon $T^*$,
		$m$ is the order of accuracy of the numerical scheme (for the Euler-Maruyama scheme $m = 1/2$),
		and $C_{\mathrm{MFA}} = C_{\mathrm{MFA}}(\lambda,\sigma,d,\alpha,L_{\nu},\nu,L_{\gamma},L_u,T^*,R,v_b,v^*,M)$.
	\end{theorem}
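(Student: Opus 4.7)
The plan is to follow the three-step analytic decomposition pioneered in \cite{carrillo2018analytical,fornasier2021consensus,fornasier2021convergence} and adapted here to exploit the truncated noise: (i) show convergence of the mean-field law $\rho_t=\Law(\overbarscript{V}_t)$ solving \eqref{eq:1} towards $\delta_{\globmin}$ on a finite time horizon; (ii) control the mean-field approximation error between $N$ independent copies of \eqref{eq:1} and the interacting particle system \eqref{eq:5}; and (iii) bound the Euler--Maruyama numerical error between \eqref{eq:5} and \eqref{eq:17}. A final triangle inequality, together with Jensen's inequality and the independence of the mean-field copies, delivers the bound \eqref{eq:7}.

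For step (i), I would apply It\^o's formula to $\normmsq{\overbarscript{V}_t-\globmin}$ along \eqref{eq:1}. The drift contributes $-2\lambda\Ep{\langle \overbarscript{V}_t-\globmin,\,\overbarscript{V}_t-\rhoM\rangle}$, which I split into $-2\lambda\Ep{\normmsq{\overbarscript{V}_t-\globmin}}$ plus a cross term bounded by $\normm{\rhoM-\globmin}\le\normm{\vrho-\globmin}$, using that $\globmin\in B_R(v_b)$ makes the projection $1$-Lipschitz to $\globmin$. The Laplace principle \eqref{eq:laplace_principle} combined with the error-bound condition \ref{asp:22} and the growth condition \ref{asp:33} render this cross term arbitrarily small, uniformly in $t$, once $\alpha$ is sufficiently large, provided a suitable concentration of $\rho_t$ around $\globmin$ is propagated along the flow. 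Crucially, the diffusion contribution is capped by $\sigma^2 M^2 d$ thanks to the truncation, so a Gr\"onwall argument yields $\Ep{\normmsq{\overbarscript{V}_t-\globmin}}\le e^{-\lambda t}\Ep{\normmsq{\overbarscript{V}_0-\globmin}}+C\,\sigma^2M^2d/\lambda+o(1)$, which at time $T^*=\lambda^{-1}\log(2W_2^2(\rho_0,\delta_{\globmin})/\epsilon)$ and under either $\lambda\ge 2\sigma^2 d$ or $\sigma^2 M^2 d=\mathcal O(\epsilon)$ gives $W_2^2(\rho_{T^*},\delta_{\globmin})\lesssim\epsilon$.

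For step (ii), I couple the particle system with $N$ independent mean-field copies $\overbarscript V^i_t$ driven by the same Brownian motions and identical initial data. The drift and the truncated diffusion are globally Lipschitz in the state variable, while the stability of the consensus map reads $\normm{\conspoint{\empmeasure{t}}-\conspoint{\rho_t}}\lesssim \alpha L_\gamma\, W_2(\empmeasure{t},\rho_t)\cdot(\text{moment factor})$ as derived in \cite{fornasier2021consensus}. Plugging in the standard $1/N$-rate for $\Ep{W_2^2(\empmeasure{t},\rho_t)}$ against independent copies, together with the sub-Gaussianity from Lemma~\ref{thm:2} to bound the moment factors uniformly in $t$, a Gr\"onwall step produces $\sup_{t\le T^*}\Ep{\normmsq{V^i_t-\overbarscript V^i_t}}\le C_{\mathrm{MFA}}/N$ whenever $N\ge 16\alpha L_\gamma\sigma^2M^2/\lambda$. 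For step (iii), since on $[0,T^*]$ the drift and truncated diffusion in \eqref{eq:5} are Lipschitz in the state and the consensus point is $\alpha$-Lipschitz in $W_2$ along the sub-Gaussian flow, the standard strong Euler--Maruyama analysis \cite{higham2001algorithmic,platen1999introduction} yields $\sup_{k\Delta t\le T^*}\Ep{\normmsq{V^i_{k\Delta t}-V^i_{k,\Delta t}}}\le C_{\mathrm{NA}}(\Delta t)^{2m}$ with $m=1/2$. Decomposing $\frac{1}{N}\sum_i V^i_{K,\Delta t}-\globmin$ into $\frac{1}{N}\sum_i(V^i_{K,\Delta t}-\overbarscript V^i_{K\Delta t})+\frac{1}{N}\sum_i(\overbarscript V^i_{K\Delta t}-\globmin)$ and exploiting independence in the second sum assembles the three contributions into \eqref{eq:7}.

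The principal obstacle, and the point at which the proof must depart from \cite{fornasier2021consensus,fornasier2021convergence}, is the uniform-in-time sub-Gaussianity of the particle system asserted in Lemma~\ref{thm:2}. Without truncation, controlling $\conspoint{\empmeasure{t}}$ would require higher moments of the particle law that grow exponentially in $\alpha$, forcing restrictive smallness assumptions on $\sigma$. With truncation, the diffusion coefficient is a priori bounded by $\sigma M$, which opens the door to an exponential-martingale argument applied to $\Ep{\exp(\normmsq{\overbarscript V_t-\globmin}/\kappa^2)}$; this delivers the needed sub-Gaussian tail bound and, through it, the Lipschitz-stability estimates for $\conspoint{\cdot}$ used in step (ii) without imposing further parameter constraints beyond those stated in the theorem.
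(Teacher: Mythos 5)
Your proposal follows essentially the same route as the paper's proof: the identical three-term error decomposition into a numerical discretization error, a mean-field approximation error controlled by a synchronous coupling, and a mean-field convergence estimate driven by the quantitative Laplace principle together with a lower bound on the mass of $\rho_t$ near $\globmin$, with the uniform-in-time sub-Gaussianity of the truncated dynamics (Lemma~\ref{thm:2}) correctly identified as the key new ingredient. The one step that does not go through as literally written is your appeal to ``the standard $1/N$-rate for $\Ep{W_2^2(\empmeasure{t},\rho_t)}$'': that rate is $N^{-2/d}$ for $d>4$, and the paper avoids this by never estimating the Wasserstein distance between empirical and limiting measure; instead it splits $\Ep{\normmsq{\conspoint{\rho_t}-\conspoint{\empmeasure{t}}}}$ through the empirical measure $\monopmeasure{t}$ of the i.i.d.\ mean-field copies and obtains the dimension-free $1/N$ bound from a direct fourth-moment computation exploiting independence (Lemma~\ref{lem:332}), combined with the Lipschitz stability of $\conspointnoarg$ in the particle positions.
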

	
	\begin{remark}
		In the statement of \Cref{thm:main1}, the parameters $R$ and $v_b$ play a crucial role.
        We already mentioned how they can be chosen in an example after Equation~\eqref{defproj}.
        The role of these parameters is bolstered in particular in the proof of \Cref{thm:main1}, where it is demonstrated that, by selecting a sufficiently large $\alpha$ depending on $R$ and $v_b$, the dynamics~\eqref{eq:1} can be set equal to  
		\begin{equation*}
			d \overbar{V}_t=-\lambda\left(\overbar{V}_t-\mathcal{P}_{\globmin,\delta}(v_\alpha(\rho_t))\right)dt+\sigma\left(\left\|\overbar{V}_t-\vrho\right\|_2\wedge M\right)d B_t,,
		\end{equation*}
		where $\delta$ represents a small value.
        For the dynamics~\eqref{eq:5}, we can analogously establish its equivalence to
		\begin{equation*}
			d V^i_t=-\lambda\left(V^i_t-\mathcal{P}_{\globmin,\delta}(v_\alpha(\widehat{\rho}_t^N))\right)dt+\sigma\left(\normm{V^i_t-v_{\alpha}(\widehat{\rho}^N_t)}\wedge M\right)dB^i_t,\quad \text{$i=1,\dots,N$},
		\end{equation*}
		with high probability, contingent upon the selection of sufficiently large values for both $\alpha$ and $N$.
	\end{remark}
	
	\begin{remark}
		The convergence result in form of Theorem~\ref{thm:main1}  obtained in this work differs from the one presented in \cite[Theorem 14]{fornasier2021consensus}
		in the sense that we obtain convergence is in expectation, while in \cite{fornasier2021consensus} convergence with high probability is established.
		This distinction arises from the truncation of the noise term employed in our algorithm.
	\end{remark}

	\section{Proof Details for Section~\ref{sec:main_result}} \label{sec:proofs}
	
	\subsection{Well-Posedness of Equations~\eqref{eq:17} and \eqref{eq:5}}
	
	With the projection map~$\CP_{v_b,R}$ being $1$-Lipschitz, existence and uniqueness of strong solutions to the SDEs~\eqref{eq:17} and \eqref{eq:5} are assured by essentially analogous proofs as in \cite[Theorems 2.1,  3.1 and 3.2]{carrillo2018analytical}.
	The details shall be omitted.
	Let us remark, however, that due to the presence of the truncation and the projection map, we do not require the function $\CE$ to be bounded from above or exhibit quadratic growth outside a ball, as required in \cite[Theorems 2.1, 3.1 and 3.2]{carrillo2018analytical}.
	
	\subsection{Proof Details for Theorem~\ref{thm:main1}}
	
	\begin{remark} \label{remark:wlog}
		Since adding some constant offset to $\CE$ does not affect the dynamics of \Cref{eq:1,eq:5}, we will assume $\underbar{f}=0$ in the proofs for simplicity but without loss of generality.
	\end{remark}

	%
	\noindent
    Let us first provide a sketch of the proof of \Cref{thm:main1}.
	For the approximation error  \eqref{eq:7} we have the error decomposition
	\begin{equation} \label{eq:error_decomposition}
		\begin{aligned}
			\Ep{\normmsq{\frac{1}{N} \sum_{i=1}^N V^i_{\revised{K, \Delta t}}-\globmin}}
            &\lesssim \underbrace{\Ep{\normmsq{\frac{1}{N} \sum_{i=1}^N \left(V^i_{\revised{K, \Delta t}}-V_{T^*}^i\right)}}}_{I}+\underbrace{\Ep{\normmsq{\frac{1}{N} \sum_{i=1}^N \left(V_{T^*}^i-\bV_{T^*}^i\right)}}}_{II}\\
			&\qquad\qquad\quad+\underbrace{\Ep{\normmsq{\frac{1}{N} \sum_{i=1}^N \bV_{T^*}^i-\globmin}}}_{III},
		\end{aligned}
	\end{equation}
    \revised{where $((\bV_t^i)_{t\geq0})_{i=1,\dots,N}$ denote $N$ independent copies of the mean-field process $(\bV_t)_{t\geq0}$ satisfying \Cref{eq:1}.}
    
	In what follows, we investigate each of the three term separately.
    Term $I$ can be bounded by $C_{\mathrm{NA}}\left(\Delta t\right)^{2m}$ using classical results on the convergence of numerical schemes for stochastic differential equations (SDEs), as mentioned for instance in \cite{platen1999introduction}.
    The second and third term, respectively, are analyzed in separate subsections, providing detailed explanations and bounds for each of the two terms $II$ and $III$.
	
    {\itshape Before doing so, let us provide a concise guide for reading the proofs.
    As the proofs are quite technical, we start for reader's convenience by presenting the main building blocks of the result first, and collect the more technical steps in subsequent lemmas.
    This arrangement should hopefully allow to grasp the structure of the proof more easily, and to dig deeper into the details along with the reading.}

	\subsubsection{Upper Bound for the Second Term in \eqref{eq:error_decomposition}}
 
	For Term $II$ of the error decomposition~\eqref{eq:error_decomposition} we have the following upper bound.
	\begin{proposition}
		Let $\CE \in \CC(\bbR^d)$ satisfy \ref{asp:11}, \ref{asp:22} and \ref{asp:33}.
        Moreover, let $R$ and $M$ be finite such that $R\geq \normm{v_b-v^*}$ and let $N\geq (16\alpha L_{\gamma}\sigma^2M^2)/\lambda$.
        Then we have
		\begin{equation}
			\Ep{\normmsq{\frac{1}{N} \sum_{i=1}^N \left(V_{T^*}^i-\bV_{T^*}^i\right)}}\leq \frac{C_{\mathrm{MFA}}}{N},
		\end{equation}
		where $C_{\mathrm{MFA}} = C_{\mathrm{MFA}}(\lambda,\sigma,d,\alpha,L_{\nu},\nu,L_{\gamma},L_u,T^*,R,v_b,v^*,M)$.
	\end{proposition}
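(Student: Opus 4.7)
\bigskip
\noindent\textbf{Proof plan.}
My plan is to implement a \emph{synchronous coupling} argument, which is by now fairly standard in the mean-field analysis of CBO but needs to be adapted here to exploit the two modifications in~\eqref{eq:5}, namely the truncation of the noise amplitude by $M$ and the projection $\mathcal{P}_{v_b,R}$ of the consensus point. I sample the initial data once, set $\bar V_0^i=V_0^i\sim\rho_0$, and drive both processes~\eqref{eq:5} and the $N$ mean-field copies of~\eqref{eq:1} with the \emph{same} Brownian motions $B^i$. Writing $D_t^i := V_t^i - \bar V_t^i$, I then compute $d\|D_t^i\|_2^2$ by It\^o's formula. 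The drift part will split into a linear contraction term $-2\lambda\|D_t^i\|_2^2$ plus a cross-term involving $\mathcal{P}_{v_b,R}(\conspoint{\empmeasure{t}})-\mathcal{P}_{v_b,R}(\conspoint{\rho_t})$, and the diffusion part will contribute, after taking expectation, a term of the form $\sigma^2 \mathbb{E}[(\|V_t^i - \conspoint{\empmeasure{t}}\|_2\wedge M - \|\bar V_t^i - \conspoint{\rho_t}\|_2\wedge M)^2]$.

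The bulk of the work is then to control the two differences in consensus points. For the drift cross-term I use that $\mathcal{P}_{v_b,R}$ is $1$-Lipschitz, so that it suffices to bound $\mathbb{E}\|\conspoint{\empmeasure{t}}-\conspoint{\rho_t}\|_2^2$. This is exactly the classical stability estimate for the weighted average $\conspoint{\cdot}$ that already appears in~\cite{carrillo2018analytical,fornasier2021consensus}: it produces a bound of the form $C(\alpha,L_\gamma,L_u,\dots)\cdot \bigl(\tfrac{1}{N}\sum_i\mathbb{E}\|D_t^i\|_2^2 + \tfrac{1}{N}\mathbb{E}[\text{higher moments}]\bigr)$, where the higher-moment term is the \emph{Monte Carlo variance} coming from replacing $\rho_t$ by $\empmeasure{t}$ and has an extra $1/N$ factor. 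For the diffusion cross-term I use that $x\mapsto \|x-a\|_2\wedge M$ is globally $1$-Lipschitz in $x$ and (because of the truncation) globally \emph{bounded} by $M$, so the corresponding contribution splits into $2\sigma^2\|D_t^i\|_2^2$ plus $2\sigma^2(\|D_t^i\|_2^2 \wedge 4M^2)$-type terms and a further $\sigma^2 M^2/N$-scale remainder from the difference of consensus points via the truncation.

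Collecting these estimates and averaging over $i=1,\dots,N$ (using exchangeability of the coupled system so that $\mathbb{E}\|D_t^i\|_2^2$ does not depend on $i$), I obtain a scalar differential inequality
\[
\frac{d}{dt}\mathbb{E}\|D_t^1\|_2^2 \;\leq\; C_1(\lambda,\sigma,\alpha,L_\gamma,\dots)\,\mathbb{E}\|D_t^1\|_2^2 \;+\; \frac{C_2}{N},
\]
where $C_2$ collects the variance of the empirical consensus point under the mean-field law together with the $\sigma^2 M^2$ contributions from the truncated diffusion. Grönwall's inequality and the vanishing initial condition $D_0^i=0$ then yield $\sup_i\mathbb{E}\|D_{T^*}^i\|_2^2\leq C_{\mathrm{MFA}}/N$ with $C_{\mathrm{MFA}}$ depending exponentially on $T^*$. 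A final Cauchy--Schwarz step
\[
\mathbb{E}\Bigl\|\frac1N\sum_{i=1}^N D_{T^*}^i\Bigr\|_2^2 \;\leq\; \frac1N\sum_{i=1}^N \mathbb{E}\|D_{T^*}^i\|_2^2
\]
delivers the claim.

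\textbf{Main obstacle.} The delicate step is the bound on $\mathbb{E}\|\conspoint{\empmeasure{t}}-\conspoint{\rho_t}\|_2^2$: it requires that moments of the form $\mathbb{E}\|\bar V_t-v^*\|_2^{2+\gamma}$ and $\mathbb{E}\,e^{-\alpha f(\bar V_t)}$ are uniformly controlled on $[0,T^*]$, and that the empirical denominator $\|\omegaa\|_{L^1(\empmeasure{t})}$ is bounded away from $0$ with high enough probability. This is exactly where the sub-Gaussianity estimate (Lemma~\ref{thm:2}) and the lower-bound condition on $N\geq 16\alpha L_\gamma \sigma^2 M^2/\lambda$ enter crucially: the truncation by $M$ keeps the fourth moment of $\bar V_t$ uniformly bounded in $t$, so $\alpha$ can be chosen large without destroying the quantitative stability constant. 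Making this quantitative (with explicit dependence on $\alpha$, $L_u$, $L_\gamma$, $R$, $v_b$ and $M$) is the only really technical point of the proof; once it is in place, Grönwall and exchangeability finish everything mechanically.
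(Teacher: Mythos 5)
Your proposal follows essentially the same route as the paper's proof: a synchronous coupling driven by shared Brownian motions, It\^o's formula on $\normmsq{V_t^i-\bV_t^i}$, the $1$-Lipschitzness of $\mathcal{P}_{v_b,R}$ to reduce everything to $\Ep{\normmsq{\conspoint{\empmeasure{t}}-\conspoint{\rho_t}}}$, a splitting of that quantity into a Monte Carlo error of order $1/N$ (controlled via the sub-Gaussianity of Lemma~\ref{thm:2}, which is exactly where the condition $N\geq 16\alpha L_\gamma\sigma^2M^2/\lambda$ enters) plus a stability term proportional to the averaged coupling distance, and finally Gr\"onwall with Jensen's inequality. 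The paper implements your ``Monte Carlo variance'' step by inserting the intermediate empirical measure $\bar\rho_t^N$ of the mean-field copies and invoking Lemma~\ref{lem:332} together with the stability estimate from \cite[Lemma 3.2]{carrillo2018analytical}, which is precisely the decomposition you describe, so the plan is correct and matches the paper.
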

	\begin{proof}
		By a synchronous coupling we have
		\begin{align*}
			d \overbar{V}^i_t &=-\lambda\left(\overbar{V}^i_t-\rhoM\right)dt+\sigma\left(\normm{\overbar{V}^i_t-\vrho}\wedge M\right)d B^i_t,\\
			d {V}^i_t &=-\lambda\left({V}^i_t-\rhoMM\right)dt+\sigma\left(\normm{\overbar{V}^i_t-v_{\alpha}(\empmeasure{t})}\wedge M\right)d B^i_t,
		\end{align*}
		with coinciding Brownian motions.
        Moreover, recall that  $\Law(\overbar{V}_t^i)=\rho_t$ and $\empmeasure{t}={1}/{N}\sum_{i=1}^N\delta_{V_t^i}$.
		By It\^o's formula we then have
		\begin{equation}\label{eq:30}
			\begin{aligned}
				d\normmsq{\bV^i_t-V_t^i}&
				=\Big(-2\lambda\inner{\bV^i_t-V_t^i}{\left(\bV^i_t-V_t^i\right)-\left(\rhoM-\rhoMM\right)}\\
				&\quad\,+\sigma^2d\left(\normm{\overbar{V}^i_t-\vrho}\wedge M-\normm{V_t^i-{v}_{\alpha}(\empmeasure{t})}\wedge M\right)^2\Big)\,dt\\
				&\quad\,+2\sigma\left(\normm{\overbar{V}^i_t-\vrho}\wedge M-\normm{V_t^i-{v}_{\alpha}(\empmeasure{t})}\wedge M\right)\left(\bV^i_t-V_t^i\right)^{\top}dB^i_t,
			\end{aligned}
		\end{equation}
		and after taking the expectation on both sides
		\begin{equation}\label{eq:31}
			\begin{aligned}
				\frac{d}{dt}\Ep{\normmsq{\bV^i_t-V_t^i}}&=-2\lambda\Ep{\inner{\bV^i_t-V_t^i}{\left(\bV^i_t-V_t^i\right)-\left(\rhoM-\rhoMM\right)}}\\
				&\quad\,+\sigma^2d\Ep{\left(\normm{\overbar{V}^i_t-\vrho}\wedge M-\normm{V_t^i-{v}_{\alpha}(\empmeasure{t})}\wedge M\right)^2}\\
				&\leq -2\lambda\Ep{\normmsq{\bV^i_t-V_t^i}}+\sigma^2d\Ep{\normmsq{\left(\bV^i_t-V_t^i\right)-\left(\vrho-{v}_{\alpha}(\empmeasure{t})\right)}}\\
				&\quad\,+2\lambda\Ep{\normm{\bV^i_t-V_t^i}\normm{\rhoM-\rhoMM}}\\
				&\leq -2\lambda\Ep{\normmsq{\bV^i_t-V_t^i}}+2\lambda\Ep{\normm{\bV^i_t-V_t^i}\normm{\vrho-{v}_{\alpha}(\empmeasure{t})}}\\
				&\quad\,+\sigma^2d\Ep{\normmsq{\left(\bV^i_t-V_t^i\right)-\left(\vrho-{v}_{\alpha}(\empmeasure{t})\right)}}.
			\end{aligned}
		\end{equation}
        \revised{Here, let us remark that the last (stochastic) term in \eqref{eq:30} disappears after taking the expectation. This is due to $\Ep{\normmsq{\bV_t^i-V_t^i}}<\infty$, which can be derived from Lemma~\ref{thm:2} after noticing that Lemma~\ref{thm:2} also holds for processes $V_t^i$.}
		Since by Young's inequality it holds
		\begin{equation*}
			2\lambda\Ep{\normm{\bV^i_t-V_t^i}\normm{\vrho-{v}_{\alpha}(\empmeasure{t})}}\leq \lambda\left(\frac{\Ep{\normmsq{\bV^i_t-V_t^i}}}{2}+2\Ep{\normmsq{\vrho-{v}_{\alpha}(\empmeasure{t})}}\right),
		\end{equation*}
		and
		\begin{equation*}
			\Ep{\normmsq{\left(\bV^i_t-V_t^i\right)-\left(\vrho-{v}_{\alpha}(\empmeasure{t})\right)}}\leq 2 \Ep{\normmsq{\bV^i_t-V_t^i}+\normmsq{\vrho-{v}_{\alpha}(\empmeasure{t})}},
		\end{equation*}
		we obtain
		\begin{equation}
			\begin{aligned}
				\frac{d}{dt}\Ep{\normmsq{\bV^i_t-V_t^i}}&\leq \left(-\frac{3\lambda}{2}+2\sigma^2d\right)\Ep{\normmsq{\bV^i_t-V_t^i}}\\
				&\quad\,+2\left(\lambda+\sigma^2d\right)\Ep{\normmsq{\vrho-{v}_{\alpha}(\empmeasure{t})}}
			\end{aligned}
		\end{equation}
        after inserting the former two inequalities into \Cref{eq:31}.
		For the term $\Ep{\normmsq{\vrho-{v}_{\alpha}(\empmeasure{t})}}$ we can decompose
		\begin{equation}\label{eq:2727}
			\begin{aligned}
				\Ep{\normmsq{\vrho-{v}_{\alpha}(\empmeasure{t})}}\leq 2\Ep{\normmsq{\vrho-{v}_{\alpha}(\bar{\rho}^N_t)}}+2\Ep{\normmsq{{v}_{\alpha}(\bar{\rho}^N_t)-{v}_{\alpha}(\empmeasure{t})}},
			\end{aligned}
		\end{equation}
		where we denote
		\begin{equation*}
            \bar{\rho}^N_t=\frac{1}{N}\sum_{i=1}^N\delta_{\overbarscript{V}_t^i}.
		\end{equation*}
		For the first term in \Cref{eq:2727}, by Lemma \ref{lem:332}, we have 
		\begin{equation*}
			\Ep{\normmsq{\vrho-{v}_{\alpha}(\bar{\rho}^N_t)}}\leq {C_0}\frac{1}{N}
		\end{equation*}
		for some constant $C_0$ depending on $\lambda,\sigma,d,\alpha,L_{\gamma},L_u,T^*,R,v_b,v^*$ and $M$.
        For the second term in \Cref{eq:2727}, by combining \cite[Lemma 3.2]{carrillo2018analytical} and Lemma \ref{thm:2}, we obtain
		\begin{equation*}
			\Ep{\normmsq{{v}_{\alpha}(\bar{\rho}^N_t)-{v}_{\alpha}(\empmeasure{t})}}\leq C_1\frac{1}{N}\sum_{i=1}^N\Ep{\normmsq{\bV^i_t-V_t^i}},
		\end{equation*}
		for some constant $C_1$ depending on $\lambda,\sigma,d,\alpha,L_u,R$ and $M$.
		Combining these estimates we conclude
		\begin{equation*}
			\begin{aligned}
				\frac{d}{dt}\frac{1}{N}\sum_{i=1}^N\Ep{\normmsq{\bV^i_t-V_t^i}}&\leq \left(-\frac{3\lambda}{2}+2\sigma^2d+4C_1\left(\lambda+\sigma^2 d\right)\right)\frac{1}{N}\sum_{i=1}^N\Ep{\normmsq{\bV^i_t-V_t^i}}\\
				&\quad\,+ 4\left(\lambda+\sigma^2 d\right)C_0\frac{1}{N}.
			\end{aligned}
		\end{equation*}
	 After an application of Gr\"onwall's inequality \revised{and noting that $\bV^i_0=V^i_0$ for all $i=1,\dots,N$, we have}
		\begin{equation}
			\begin{aligned}
				\frac{1}{N}\sum_{i=1}^N\Ep{\normmsq{\bV^i_t-V_t^i}}&\leq 4\left(\lambda+\sigma^2 d\right)\frac{C_0}{N}te^{\left(-\frac{3\lambda}{2}+2\sigma^2d+4C_1(\lambda+\sigma^2 d)\right)t}.
			\end{aligned}
		\end{equation}
        for any $t\in [0,T^*]$.
		Finally, by Jensen's inequality and letting $t=T^*$, we have 
		\begin{equation}
			\Ep{\normmsq{\frac{1}{N} \sum_{i=1}^N \left(V_{T^*}^i-\bV_{T^*}^i\right)}}\leq \frac{C_{\mathrm{MFA}}}{N},
		\end{equation}
		where the constant $C_{\mathrm{MFA}}$ depends on $\lambda,\sigma,d,\alpha,L_u,L_{\gamma},T^*,R,v_b,v^*$ and $M$. 
	\end{proof}

    \noindent
	In the next lemma we show that the distribution of $\overbar{V}_t$ is sub-Gaussian.
	\begin{lemma}\label{thm:2}
		Let $R$ and $M$ be finite with $R\geq \normm{v_b-v^*}$. For any $\revised{\kappa}>0$, let $N$ satisfy $N\geq{(4 \sigma^2 M^2)}/{(\lambda \revised{\kappa}^2)}$.
        Then, provided that $\Ep{\exp({\sum_{i=1}^N\normm{\overbar{V}^i_0-\globmin}^2}/{(N\revised{\kappa}^2)})}<\infty$, it holds
		\begin{equation}
			\gC:=	\sup_{t\in [0,T^*]}\Ep{\exp\left(\frac{\sum_{i=1}^N\normm{\overbarscript{V}^i_t-\globmin}^2}{N\revised{\kappa}^2}\right)} <\infty,
		\end{equation}
		where $\gC$ depends on $\revised{\kappa},\lambda,\sigma,d,R,M$ and $T^*$, and where
		\begin{equation*}
			d \overbar{V}^i_t =-\lambda\left(\overbar{V}^i_t-\rhoM\right)dt+\sigma\left(\normm{\overbar{V}^i_t-\vrho}\wedge M\right)d B^i_t
		\end{equation*}
        for $i=1,\dots,N$ with
		$B^i_t$ being independent to each other and $\Law(\overbar{V}_t^i)=\rho_t$.
	\end{lemma}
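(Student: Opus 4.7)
The plan is to apply Itô's formula in two stages: first to each $\normmsq{\overbar{V}^i_t - \globmin}$, then to the exponential of the normalized sum. Setting $X^i_t := \overbar{V}^i_t - \globmin$ and exploiting that $\globmin \in B_R(v_b)$ (which is ensured by $R \geq \normm{v_b - \globmin}$), I would rewrite the drift inner product as $-2\lambda \normmsq{X^i_t} + 2\lambda \inner{X^i_t}{\rhoM - \globmin}$. Since both $\globmin$ and $\rhoM$ lie in $B_R(v_b)$, the difference has norm at most $2R$, and Young's inequality absorbs the cross term to yield a drift bound $-\lambda \normmsq{X^i_t} + 4\lambda R^2$. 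Combined with the truncated diffusion contribution, bounded by $\sigma^2 d M^2$, one obtains
\begin{equation*}
d\normmsq{X^i_t} \leq \left(-\lambda \normmsq{X^i_t} + 4\lambda R^2 + \sigma^2 d M^2\right) dt + d\mathfrak{M}^i_t,
\end{equation*}
where $\mathfrak{M}^i_t$ is a local martingale whose quadratic variation is bounded by $4\sigma^2 M^2 \normmsq{X^i_t} dt$, again thanks to the truncation.

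Next, I would set $Z_t := \frac{1}{N\kappa^2}\sum_{i=1}^N \normmsq{X^i_t}$ and apply Itô's formula to $e^{Z_t}$. Summing the previous bound and using independence of the Brownian motions (so that cross-terms in the quadratic variation vanish), the contribution $\tfrac{1}{2} e^{Z_t} d\langle Z\rangle_t$ is at most $\tfrac{2\sigma^2 M^2}{N\kappa^2} Z_t\, e^{Z_t}\, dt$, giving
\begin{equation*}
d e^{Z_t} \leq e^{Z_t}\left\{\left(-\lambda + \frac{2\sigma^2 M^2}{N\kappa^2}\right) Z_t + \frac{4\lambda R^2 + \sigma^2 d M^2}{\kappa^2}\right\} dt + e^{Z_t}\, d\mathfrak{N}_t,
\end{equation*}
where $\mathfrak{N}_t := \frac{1}{N\kappa^2}\sum_i \mathfrak{M}^i_t$ is a local martingale. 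The hypothesis $N \geq 4\sigma^2 M^2/(\lambda\kappa^2)$ makes the coefficient of $Z_t$ at most $-\lambda/2$. Writing $C := (4\lambda R^2 + \sigma^2 d M^2)/\kappa^2$, an elementary calculus argument shows $e^z\bigl(-\tfrac{\lambda}{2}z + C\bigr) \leq C_0$ uniformly for $z \geq 0$, where $C_0$ depends only on $\lambda$ and $C$ (indeed the maximum is attained at $z^\star = 2C/\lambda - 1$ when positive, and equals $(\lambda/2)\exp(2C/\lambda - 1)$ there). After a standard localization by stopping times $\tau_n := \inf\{t : Z_t > n\}$ to turn $\mathfrak{N}_{\cdot \wedge \tau_n}$ into a true martingale, taking expectations yields $\frac{d}{dt} \Ep{e^{Z_{t \wedge \tau_n}}} \leq C_0$, and monotone convergence as $n \to \infty$ gives $\Ep{e^{Z_t}} \leq \Ep{e^{Z_0}} + C_0 T^*$ for all $t \in [0, T^*]$, which is the claimed uniform bound, with $\gC$ depending only on $\kappa, \lambda, \sigma, d, R, M$ and $T^*$.

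The main obstacle is the proper handling of the martingale term, since $\mathfrak{N}_t$ is a priori only a local martingale and one cannot assume its expectation vanishes. The localization must be set up in the right order: one first derives the uniform-in-$n$ differential inequality for $\Ep{e^{Z_{t \wedge \tau_n}}}$, and only then passes to the limit via monotone convergence. A secondary subtlety is the sharp accounting of the $Z_t e^{Z_t}$ coefficient after summation over $i$: the factor $1/N$ that appears there is what drives the threshold $N \geq 4\sigma^2 M^2/(\lambda\kappa^2)$, and the step is also the one where the truncation by $M$ is indispensable — without it, the quadratic variation of $Z_t$ would scale like $Z_t^2$ rather than $Z_t$, and no choice of $N$ would make the coefficient of $Z_t e^{Z_t}$ negative.
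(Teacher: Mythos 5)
Your argument is correct and follows essentially the same route as the paper: the same drift bound via the projection estimate $\left\|\mathcal{P}_{v_b,R}(v_\alpha(\rho_t))-v^*\right\|_2\le 2R$ and Young's inequality, the same use of the truncation $M$ and of the threshold $N\ge 4\sigma^2M^2/(\lambda\kappa^2)$ to make the coefficient of $Z_te^{Z_t}$ negative, and the same uniform bound on $z\mapsto e^{z}(-\tfrac{\lambda}{2}z+C)$; the paper merely secures integrability by capping the exponent with a $\mathcal{C}^2$ truncation $G_W$ of $x\wedge W$ and letting $W\to\infty$, rather than localizing with stopping times. The only nit is that your final passage to the limit in $n$ should invoke Fatou's lemma rather than monotone convergence, since $Z_{t\wedge\tau_n}$ is not monotone in $n$.
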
 
	
	\begin{proof}
        To apply It\^o's formula, we need to truncate the function $\exp({\normmsq{v}}/{\revised{\kappa}^2})$ from above.
		\revised{For this,} define \revised{for $W>0$ the function}
		\begin{equation*}
			\ff(x):=\begin{cases}
				x& x\in[0,W-1]\\
				\frac{1}{16}(x+1-W)^4-\frac{1}{4}(x+1-W)^3+x&x\in[W-1,W+1]\\
				W&x\in [W+1,\infty)
			\end{cases}.
		\end{equation*}
		It is easy to verify that $\ff$ is a $\mathcal{C}^2$ approximation of the function $x\wedge W$ satisfying $ \ff\in \mathcal{C}^2(\mathbb{R}^+)$, $\ff(x)\leq x\wedge W$, $ 
		\ff^{\prime}\in [0,1]$ and $\ff^{\prime\prime}\leq 0$.
		
		Since $\Gwn:=\exp({\ff\big(\!\sum_{i=1}^N\normmsq{\bV^i_t-\globmin}/N\big)}/{\revised{\kappa}^2})$ is upper bounded, we can apply It\^o's formula to it.
        We abbreviate $\ff^{\prime}:=\ff^{\prime}\big(\!\sum_{i=1}^N\normmsq{\bV_t^i\revised{-\globmin}}/N\big)$ and $\ff^{\prime\prime}:=\ff^{\prime\prime}\big(\!\sum_{i=1}^N\normmsq{\overbar{V}_t^i}/N\big)$ in what follows.
        With the notation $Y_t:=\left((\overbar{V}_t^1)^{\top},\cdots,(\overbar{V}_t^N)^{\top}\right)^{\top}$, the $Nd$ dimensional process $Y_t$ satisfies $ dY_t=-\lambda\big(Y_t-\overline{\mathcal{P}_{v_b,R}(\rho_t)}\big)\,dt+\mathcal{M}dB_t$, where $\overline{\mathcal{P}_{v_b,R}(\rho_t)}=\left({\mathcal{P}_{v_b,R}(\rho_t)}^{\top},\ldots,{\mathcal{P}_{v_b,R}(\rho_t)}^{\top}\right)^{\top}$, $\mathcal{M}=\operatorname{diag}\left(\mathcal{M}_1,\ldots,\mathcal{M}_N\right)$ with $\mathcal{M}_i=\sigma\normm{\overbar{V}^i_t-v_\alpha\left(\rho_t\right)}\wedge M \mathrm{I}_{d}$ and $B_t$ the $Nd$ dimensional Brownian motion.
        We then have $\Gwn=\exp\left(G_W\big(\!\normmsq{Y_t}/N\big)/\revised{\kappa}^2\right)$ and 
	\begin{equation}\label{eq:crux1}
			\begin{aligned}
				d \Gwn&= \sum_{i=1}^N\nabla_{Y_t}\Gwn dY_t+\frac{1}{2}\operatorname{tr}\left(\mathcal{M}\nabla_{Y_t,Y_t}^2 \Gwn \mathcal{M}\right)dt\\
                &=\Gwn\frac{\ff^{\prime}}{\revised{\kappa}^2}\sum_{i=1}^N\left(2\frac{\overbar{V}^i_t-\globmin}{N}\right)^{\top}d\overbar{V}^i_t\\
				&\quad\,+\frac{1}{2}\Gwn\sum_{i=1}^N\left(\ff^{\prime}\frac{2d}{N\revised{\kappa}^2}+\ff^{\prime\prime}\frac{4\normm{\overbar{V}^i_t-\globmin}^2}{N^2\revised{\kappa}^2}\right.\\
				&\quad\,\left.+\left(\ff^{\prime}\right)^2\frac{4\normm{\overbar{V}^i_t-\globmin}^2}{N^2\revised{\kappa}^4}\right)\left(\sigma\normm{\overbar{V}^i_t-v_\alpha\left(\rho_t\right)}\wedge M\right)^2dt.\\
			\end{aligned}
		\end{equation}
  The first term on the right-hand side of \eqref{eq:crux1} can be expanded as follows
\begin{equation}\label{eq:477}
    \begin{aligned}
        &\Gwn\frac{\ff^{\prime}}{\revised{\kappa}^2}\sum_{i=1}^N\left(2\frac{\overbar{V}^i_t-\globmin}{N}\right)^{\top}d\overbar{V}^i_t
        =\Gwn\ff^{\prime}\sum_{i=1}^N\left(2\frac{\overbar{V}^i_t-\globmin}{N\revised{\kappa}^2}\right)^{\top}d\overbar{V}^i_t\\
        &\quad=\Gwn\ff^{\prime}\sum_{i=1}^N\left(2\frac{\overbar{V}^i_t-\globmin}{N\revised{\kappa}^2}\right)^{\top}\left(-\lambda\left(\bar{V_t}^i-v^*+{v^*-\mathcal{P}_{v_b,R}(\rho_t)})\right)dt+\sigma\left(\normm{\overbar{V}^i_t-\vrho}\wedge M\right)d B^i_t\right)\\
        &\quad= \Gwn\ff^{\prime}\left\{\frac{-2\lambda}{N\revised{\kappa}^2}\sum_{i=1}^N\normmsq{\overbar{V}_t^i-v^*}dt-\frac{2\lambda}{N\revised{\kappa}^2}\sum_{i=1}^N\inner{\overbar{V}_t^i-v^*}{v^*-\mathcal{P}_{v_b,R}(v_\alpha(\rho_t))}dt \right . \\
        & \phantom{XXXXXXXXXXXXXXXXXXX}\;\;\,\left.  +2\sigma\sum_{i=1}^N \left(\normm{\overbar{V}^i_t-\vrho}\wedge M\right)\left(\frac{(\overbar{V}^i_t-\globmin)}{N\revised{\kappa}^2}\right)^{\top}d B^i_t \right\}.
    \end{aligned}
\end{equation}
Notice additionally that
\begin{equation}\label{eq:4888}
    \inner{\overbar{V}_t^i-v^*}{v^*-\mathcal{P}_{v_b,R}(v_\alpha(\rho_t))} \leq \normm{\overbar{V}_t^i-v^*}\normm{v^*-\mathcal{P}_{v_b,R}(v_\alpha(\rho_t))} \leq 2 R \normm{\overbar{V}_t^i-v^*}
\end{equation}
as $v^*$ and $\mathcal{P}_{v_b,R}(v_\alpha(\rho_t))$ belong to the same ball $B_R(v_b)$ around  $v_b$ of radius $R$.
Similarly, we can expand the coefficient of the second term.
According to the properties $\ff^{\prime}\in [0,1]$ and $\ff^{\prime\prime}\leq0$ we can bound it from above yielding
\begin{equation}\label{eq:488}
    \begin{aligned}
        &\frac{1}{2}\Gwn\sum_{i=1}^N\left(\ff^{\prime}\frac{2d}{N\revised{\kappa}^2}+\ff^{\prime\prime}\frac{4\normm{\overbar{V}^i_t-\globmin}^2}{N^2\revised{\kappa}^2}+\left(\ff^{\prime}\right)^2\frac{4\normm{\overbar{V}^i_t-\globmin}^2}{N^2\revised{\kappa}^4}\right)\left(\sigma\normm{\overbar{V}^i_t-v_\alpha\left(\rho_t\right)}\wedge M\right)^2\\
    &\quad\leq \Gwn\ff^{\prime}\frac{\sigma^2M^2d}{\revised{\kappa}^2}+\Gwn \left(\ff^{\prime}\right)^2\frac{2\sigma^2M^2}{N^2\revised{\kappa}^4}\sum_{i=1}^N\normmsq{\overbar{V}_t^i-v^*}\\
    &\quad\leq \Gwn\ff^{\prime}\frac{\sigma^2M^2d}{\revised{\kappa}^2}+\Gwn \ff^{\prime}\frac{2\sigma^2M^2}{N^2\revised{\kappa}^4}\sum_{i=1}^N\normmsq{\overbar{V}_t^i-v^*}.
    \end{aligned}
\end{equation}
By taking expectations in \eqref{eq:crux1} and combining it with \eqref{eq:477}, \eqref{eq:4888} and \eqref{eq:488}, we obtain
\begin{equation*}
 \begin{aligned}
				\frac{d}{dt}\Ep{\Gwn}
 &\leq \mathbb{E} \left[ \Gwn\ff^{\prime}\left(\frac{-2\lambda}{N\revised{\kappa}^2}\sum_{i=1}^N\normmsq{\overbar{V}_t^i-v^*}+\frac{4 R \lambda}{N\revised{\kappa}^2}\sum_{i=1}^N\normm{\overbar{V}_t^i-v^*}\right. \right.\\
&\quad\,+ \left.\left. \Gwn\ff^{\prime}\frac{\sigma^2M^2d}{\revised{\kappa}^2}+\Gwn \ff^{\prime}\frac{2\sigma^2M^2}{N^2\revised{\kappa}^4}\sum_{i=1}^N\normmsq{\overbar{V}_t^i-v^*} \right)\right]
			\end{aligned}
    \end{equation*}
Rearranging the former yields
		\begin{equation}\label{eq:44}
			\begin{aligned}
				\frac{d}{dt}\Ep{\Gwn}
				&\leq \Ep{\Gwn\ff^{\prime}\left(\left(\left(\frac{4\lambda R}{N\revised{\kappa}^2}\sum_{i=1}^N \normm{\overbar{V}^i_t-\globmin}\right)+\frac{\sigma^2M^2d}{\revised{\kappa}^2}\right)\right.\right.\\
                &\quad\,\left.\left.-\left(\frac{2\lambda}{N\revised{\kappa}^2}-\frac{2\sigma^2M^2}{N^2\revised{\kappa}^4}\right)\sum_{i=1}^N\normmsq{\overbar{V}^i_t-\globmin}\right)},
			\end{aligned}
		\end{equation}
		Since by Young's inequality, it holds $4R\normm{\overbar{V}^i_t-v^*}\leq 4R^2+\normmsq{\overbar{V}^i_t-v^*}$, we can continue Estimate~\eqref{eq:44} by
		\begin{equation} \label{eq:44444}
			\begin{aligned}
				\frac{d}{dt}\Ep{\Gwn}
				&\leq \Ep{\Gwn\ff^{\prime}\left(\frac{\sigma^2M^2d+\revised{4} \lambda R^2}{\revised{\kappa}^2}-\left(\frac{ \lambda}{N\revised{\kappa}^2}-\frac{2\sigma^2M^2}{N^2\revised{\kappa}^4}\right)\sum_{i=1}^N\normmsq{\overbar{V}^i_t-\globmin}\right)}\\
                &\leq \Ep{\Gwn\ff^{\prime}\left(-{A}\sum_{i=1}^N\normmsq{\overbar{V}^i_t-\globmin}+{B}\right)}
			\end{aligned}
		\end{equation}
		with $A:=\frac{\lambda}{N\revised{\kappa}^2}-\frac{2\sigma^2M^2}{N^2\revised{\kappa}^4}$ and $B:=\frac{\sigma^2M^2d+4\lambda R^2}{\revised{\kappa}^2}$.
		Now, if  $\sum_{i=1}^N\normmsq{\overbar{V}^i_t-\globmin}\geq ({B-1})/{A}$, we have 
    	\begin{align*}
			&\Gwn\ff^{\prime}\left(-{A}\sum_{i=1}^N\normmsq{\overbar{V}^i_t-\globmin}+{B}\right)\leq 0,\\
            \intertext{while, if $\sum_{i=1}^N\normmsq{\overbar{V}^i_t-\globmin}\leq ({B-1})/{A}$, we have}
    	    &\Gwn\ff^{\prime}\left(-{A}\sum_{i=1}^N\normmsq{\overbar{V}^i_t-\globmin}+{B}\right)\leq Be^{\frac{B-1}{N\revised{\kappa}^2A}}.
		\end{align*}
		Thus the latter inequality always holds true
		and consequently we have with \eqref{eq:44444}
		\begin{equation*}
			\begin{aligned}
				\frac{d}{dt}\Ep{\Gwn}\leq Be^{\frac{B-1}{N\revised{\kappa}^2A}},
			\end{aligned}	
		\end{equation*}
		which gives after integration
		\begin{equation*}
			\begin{aligned}
				\Ep{\Gwn}
                &\leq\Ep{\Gwno}+Be^{\frac{B-1}{N\revised{\kappa}^2A}}t\\
                &\leq \Ep{\exp\left(\frac{\sum_{i=1}^N\normm{\overbar{V}^i_0-\globmin}^2}{N\revised{\kappa}^2}\right)}+Be^{\frac{B-1}{N\revised{\kappa}^2A}}t.
			\end{aligned}
		\end{equation*}
		Letting $W\to\infty$, we eventually obtain
		\begin{equation}
			\Ep{\exp\left(\frac{\sum_{i=1}^N\normmsq{\bV^i_t-\globmin}}{N\revised{\kappa}^2}\right)}
            \leq \Ep{\exp\left(\frac{\sum_{i=1}^N\normm{\overbar{V}^i_0-\globmin}^2}{N\revised{\kappa}^2}\right)}+Be^{\frac{B-1}{N\revised{\kappa}^2A}}t<\infty,
		\end{equation}
		provided that $\Ep{\exp({\sum_{i=1}^N\normm{\overbar{V}^i_0-\globmin}^2}/{N\revised{\kappa}^2})}<\infty$.
  
        If $N\geq {(4 \sigma^2 M^2)}/{(\lambda \revised{\kappa}^2)}$ , we have 
		\begin{equation*}
			\frac{B-1}{N\revised{\kappa}^2A}\leq\frac{B}{N\revised{\kappa}^2A}=\frac{N(\sigma^2M^2d+4\lambda R^2)}{\lambda N\revised{\kappa}^2-2\sigma^2M^2}\leq C(\revised{\kappa},\lambda,\sigma,M,R,d).
		\end{equation*}
		Thus, $\gC$ is upper bounded and independent of $N$.
	\end{proof}

\begin{remark}
    \revised{The sub-Gaussianity of $\bV_t$ follows from Lemma~\ref{thm:2} by noticing that the statement can be applied in the setting $N=1$ when choosing $\kappa$ sufficiently large.}
\end{remark}
\begin{remark}
    In Lemma \ref{thm:2}, as the number of particles $N$ increases, the condition for $\revised{\kappa}$ to ensure $\gC<\infty$ becomes more relaxed.
    Specifically, the value of $\revised{\kappa}$ can be as small as one needs as $N$ increases.
    This phenomenon can be easily understood by considering the limit as $N$ approaches infinity.
    In this case, $\gC$ tends to $\sup_{t\in[0,T^*]}\exp(\Ep{\normmsq{\overbar{V}_t-v^*}}/\revised{\kappa}^2)$.
    Therefore, as one shows an upper bound on the second moment of $\overbar{V}_t$, it becomes evident that $\gC$ remains finite as $N$ tends to infinity.
\end{remark}
\noindent
With the help of Lemma \ref{thm:2}, we can now prove the following lemma.
\begin{lemma}\label{lem:332}
    Let $\CE \in \CC(\bbR^d)$ satisfy \ref{asp:11} and \ref{asp:33}.
    Then, for any $t\in [0,T^*]$, $M$ and $R$ with $R\geq \normm{v_b-v^*}$ finite, and $N$ satisfying $N\geq (16\alpha L_{\gamma}\sigma^2M^2)/\lambda$,
        we have 
	\begin{equation}
		\Ep{\normm{\vrho-{v}_{\alpha}(\bar{\rho}_t^{\revised{N}})}^2}\leq \frac{C_0}{N},
	\end{equation}
	where $C_0:=C_0(\lambda,\sigma,d,\alpha,L_{\gamma},L_u,T^*,R,v_b,v^*,M)$.
	\end{lemma}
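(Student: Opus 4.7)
The plan is to view the consensus point $v_\alpha(\bar\rho_t^N)$ as an empirical version of $v_\alpha(\rho_t) = \vrho$ based on $N$ iid samples from $\rho_t$, and to exploit the standard $\mathcal{O}(1/N)$ variance decay of such empirical means. Introducing the shorthand $A_t := \int v\,\omega_\alpha(v)\,d\rho_t(v)$, $B_t := \int \omega_\alpha(v)\,d\rho_t(v)$ and their empirical counterparts $\hat A_t, \hat B_t$ formed from $\bV^1_t,\dots,\bV^N_t$, the elementary identity
\begin{equation*}
v_\alpha(\bar\rho_t^N) - \vrho = \frac{1}{\hat B_t}\bigl[(\hat A_t - A_t) - \vrho\,(\hat B_t - B_t)\bigr]
\end{equation*}
will be the starting point of the analysis.

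Since $\omega_\alpha(v) \leq 1$ (adopting the normalization $f(v^*)=0$ from Remark \ref{remark:wlog}) and $\sup_{v} \normm{v\,\omega_\alpha(v)} \leq L_u$ by \ref{asp:11}, the iid property of the copies $\bV_t^i \sim \rho_t$ immediately yields the Monte Carlo variance bounds $\Ep{\normmsq{\hat A_t - A_t}} \leq L_u^2/N$ and $\Ep{(\hat B_t - B_t)^2} \leq 1/N$. The extra ingredient I need is a uniform-in-$t$ lower bound $B_t \geq b$ for some positive constant $b$. This follows from Jensen's inequality, $B_t \geq \exp(-\alpha\,\Ep{f(\bV_t)})$, combined with the growth bound from \ref{asp:33}, which gives $\Ep{f(\bV_t)} \leq L_\gamma(1 + \Ep{\normm{\bV_t - v^*}^{1+\gamma}})$, and the sub-Gaussian moment bound supplied by Lemma \ref{thm:2} applied in the single-particle setting. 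Together these yield $B_t \geq b = b(\alpha,\lambda,\sigma,d,L_\gamma,R,v_b,v^*,M,T^*) > 0$, and in particular $\normm{\vrho} \leq L_u/b$.

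The main technical obstacle is that $1/\hat B_t$ is not a priori controlled, so the variance bounds cannot be inserted directly into the identity above. I would circumvent this by splitting on the event $E := \{\hat B_t \geq b/2\}$. On $E$ one has $1/\hat B_t \leq 2/b$, and combining with the Monte Carlo bounds produces the required $C_0/N$ estimate for the restricted expectation. On the complementary event $E^c$, since the summands $\omega_\alpha(\bV_t^i) \in [0,1]$ are iid with mean $B_t \geq b$, Hoeffding's inequality furnishes the exponential tail $\mathbb{P}(E^c) \leq 2 e^{-Nb^2/2}$. Using Cauchy-Schwarz together with the crude bound $\normm{v_\alpha(\bar\rho_t^N)} \leq \max_{1 \leq i \leq N}\normm{\bV_t^i}$ and the sub-Gaussian maxima estimate derivable from Lemma \ref{thm:2}, the contribution of $E^c$ is seen to decay exponentially fast in $N$ and is thus absorbed into $C_0/N$. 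The quantitative hypothesis $N \geq 16\alpha L_\gamma \sigma^2 M^2/\lambda$ is precisely what is needed to invoke Lemma \ref{thm:2} with a suitable $\kappa$ so that all hidden constants can be packaged into $C_0 = C_0(\lambda,\sigma,d,\alpha,L_\gamma,L_u,T^*,R,v_b,v^*,M)$.
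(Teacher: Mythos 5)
Your proposal is correct, but it diverges from the paper's proof at the key technical step, namely the control of the random denominator $\hat B_t = \frac{1}{N}\sum_{i=1}^N e^{-\alpha f(\overbarscript{V}_t^i)}$. The paper never introduces a concentration event: it bounds $1/\hat B_t$ \emph{pathwise} by Jensen's inequality, $1/\hat B_t \leq \exp\big(\alpha \frac{1}{N}\sum_i f(\overbarscript{V}_t^i)\big)$, and then pays for this random factor with Cauchy--Schwarz, splitting the error into $T_1 T_2 + \normmsq{\vrho}\,T_1 T_3$ where $T_1$ is an exponential moment controlled via \ref{asp:33} and the sub-Gaussian estimate of Lemma~\ref{thm:2} with $\kappa^2 = 1/(4\alpha L_\gamma)$ (this is exactly where the hypothesis $N\geq 16\alpha L_\gamma\sigma^2M^2/\lambda$ originates), and $T_2, T_3$ are \emph{fourth}-moment Monte Carlo bounds of order $1/N$. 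You instead lower-bound the population denominator $B_t \geq b > 0$, restrict to the event $\{\hat B_t \geq b/2\}$, use only second-moment Monte Carlo bounds there, and dispose of the complementary event by Hoeffding plus the convex-hull bound $\normm{v_\alpha(\bar\rho_t^N)}\leq \max_i\normm{\overbarscript{V}_t^i}$. Both routes are sound; yours is arguably more elementary (second moments and Hoeffding in place of fourth moments and exponential moment estimates), and your bound $\normm{\vrho}\leq L_u/b$ sidesteps the paper's detour through the quantitative Laplace principle and the mass lower bound (Lemmas~\ref{prop:1} and~\ref{prop:2}), so your constant genuinely does not depend on $L_\nu,\nu$ — which in fact matches the lemma statement better than the paper's own concluding sentence does. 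Two small caveats: your final remark that the hypothesis on $N$ is ``precisely what is needed'' to invoke Lemma~\ref{thm:2} is not accurate for \emph{your} argument (you only need a uniform fourth moment on $E^c$ and a $(1+\gamma)$-moment for $b$, both available from Lemma~\ref{lem:7}; the sharp constant $16\alpha L_\gamma\sigma^2M^2/\lambda$ is an artifact of the paper's $T_1$ estimate), and on $E^c$ you should make explicit that even the crude bound $\Ep{\max_i\normm{\overbarscript{V}_t^i}^4}\leq N\,\Ep{\normm{\overbarscript{V}_t}^4}$ suffices, since $N e^{-Nb^2/4}$ is still $o(1/N)$.
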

			\begin{proof}
				Without loss of generality, we assume $\globmin=0$ and recall that we assumed $\underline{f}=0$ in the proofs as of Remark~\ref{remark:wlog}.
			We have
			\begin{equation} \label{eq:zsafbip}
				\begin{aligned}
					&\Ep{\normm{\vrho-{v}_{\alpha}(\bar{\rho}_t^{\revised{N}})}^2}=\Ep{\normm{\frac{\frac{1}{N}\sum_{i=1}^N\bV_t^ie^{-\alpha f(\overbarscript{V}_t^i)}}{\frac{1}{N}\sum_{i=1}^Ne^{-\alpha f(\overbarscript{V}_t^i)}}-\frac{\int_{\mathbb{R}^d}v e^{-\alpha f(v)}d\rho_t(v)}{\int_{\mathbb{R}^d}e^{-\alpha f(v)}d\rho_t(v)}}^2}\\
					&\quad\leq 2\Ep{\normmsq{\frac{1}{\frac{1}{N}\sum_{i=1}^Ne^{-\alpha f(\overbarscript{V}_t^i)}}\left({\frac{1}{N}\sum_{i=1}^N\bV_t^ie^{-\alpha f(\overbarscript{V}_t^i)}}-{\int_{\mathbb{R}^d}v e^{-\alpha f(v)}d\rho_t(v)}\right)}}\\
					& \quad\quad\,+2\Ep{\normm{\frac{v_{\alpha}(\rho_t)}{\frac{1}{N}\sum_{i=1}^Ne^{-\alpha f(\overbarscript{V}_t^i)}}\left(\frac{1}{N}\sum_{i=1}^Ne^{-\alpha f(\overbarscript{V}_t^i)}-\int_{\mathbb{R}^d}e^{-\alpha f(v)}d\rho_t(v)\right)}^{2}}\\
					&\quad\leq 2\Ep{\normmsq{e^{\alpha\frac{1}{N}\sum_{i=1}^Nf(\overbarscript{V}_t^i)}\left({\frac{1}{N}\sum_{i=1}^N\bV_t^ie^{-\alpha f(\overbarscript{V}_t^i)}}-{\int_{\mathbb{R}^d}v e^{-\alpha f(v)}d\rho_t(v)}\right)}}\\
					&\quad\quad\,+2\normmsq{\vrho}\Ep{\normm{e^{\alpha\frac{1}{N}\sum_{i=1}^Nf(\overbarscript{V}_t^i)}\left(\frac{1}{N}\sum_{i=1}^Ne^{-\alpha f(\overbarscript{V}_t^i)}-\int_{\mathbb{R}^d}e^{-\alpha f(v)}d\rho_t(v)\right)}^{2}}\\
					&\quad\leq 
					2T_1 T_2+2\normmsq{\vrho}T_1 T_3,
				\end{aligned}
			\end{equation}
			where we defined
			\begin{align*}
				T_1&:=\left(\Ep{{e^{4\alpha\frac{1}{N}\sum_{i=1}^Nf(\overbarscript{V}_t^i)}}}\right)^{\frac{1}{2}},\\
				T_2&:=\left(\Ep{\normm{{\frac{1}{N}\sum_{i=1}^N\bV_t^ie^{-\alpha f(\overbarscript{V}_t^i)}}-{\int_{\mathbb{R}^d}v e^{-\alpha f(v)}d\rho_t(v)}}^4}\right)^{\frac{1}{2}},\\
				 T_3&:=\left(\Ep{\normm{{\frac{1}{N}\sum_{i=1}^Ne^{-\alpha f(\overbarscript{V}_t^i)}}-{\int_{\mathbb{R}^d}e^{-\alpha f(v)}d\rho_t(v)}}^4}\right)^{\frac{1}{2}}.
			\end{align*}
			In the following, we upper bound the terms $T_1, T_2$ and $ T_3$ separately.
			Firstly, recall that by Lemma~\ref{thm:2} we have for $t\in [0,T^*]$ that
            \begin{equation}
				\Ep{\exp\left(\frac{\sum_{i=1}^N\normmsq{\bV_t^i}}{N\revised{\kappa}^2}\right)}\leq C_{\revised{\kappa}}<\infty,
			\end{equation}
            where $\gC$ only depends on $\revised{\kappa},\lambda,\sigma,d,R,M$ and $T^*$.
            With this,
			\begin{equation*}
				\begin{aligned}
					T_1^2 = \Ep{{\exp\left({4\alpha\frac{1}{N}\sum_{i=1}^Nf(\bV_t^i)}\right)}}&\leq \Ep{\exp\left(4\alpha\frac{1}{N}\sum_{i=1}^NL_{\gamma}\left(1+\normm{\bV_t^i}^{1+\gamma}\right)\right)}\\
					&\leq e^{4\alpha L_{\gamma}}\Ep{\exp\left(4\alpha L_{\gamma}\frac{1}{N}\sum_{i=1}^N\normm{\bV_t^i}^{1+\gamma}\right)}\\
					&\leq e^{8\alpha L_{\gamma}}\Ep{\exp\left(4\alpha L_{\gamma}\frac{1}{N}\sum_{i=1}^N\normm{\bV_t^i}^{2}\right)}\\
					&=e^{8\alpha L_{\gamma}}\Ep{\exp\left(\frac{1}{\revised{\kappa}^2}\frac{1}{N}\sum_{i=1}^N\normm{\bV_t^i}^{2}\right)} \\
					&\leq e^{8\alpha L_{\gamma}}\gC\!\!\mid_{\revised{\kappa}=\frac{1}{2\sqrt{\alpha L_{\gamma}}}},
				\end{aligned}
			\end{equation*}
            where we set $\revised{\kappa}^2= {1}/{(4\alpha L_{\gamma})}$ in the next-to-last step and where $N$ should satisfy $N\geq(16\alpha L_{\gamma}\sigma^2M^2)/\lambda$.
			Secondly, we have 
			\begin{equation*}
				\begin{aligned}
					\Ep{\normm{{\frac{1}{N}\sum_{i=1}^N\bV_t^ie^{-\alpha f(\overbarscript{V}_t^i)}}-{\int_{\mathbb{R}^d}v e^{-\alpha f(v)}d\rho_t(v)}}^4}&=\frac{1}{N^4}\Ep{\sum_{i_1,i_2,i_3,i_4\in\{1,\dots,N\}}\inner{\overbar{Z}_t^{i_1}}{\overbar{Z}_t^{i_2}}\inner{\overbar{Z}_t^{i_3}}{\overbar{Z}_t^{i_4}}}\\
					&\leq \frac{4!L_u^4}{N^2},
				\end{aligned}
			\end{equation*}
			where $\left(\overbar{Z}_t^i:=\bV_t^ie^{-\alpha f(\overbarscript{V}_t^i)}-{\int_{\mathbb{R}^d}v e^{-\alpha f(v)}d\rho_t(v)}\right)_{i=1,\dots,N}$ are i.i.d.\@ and have zero mean.
			Thus,
			\begin{equation*}
				T_2 = \left(\Ep{\normm{{\frac{1}{N}\sum_{i=1}^N\bV_t^ie^{-\alpha f(\overbarscript{V}_t^i)}}-{\int_{\mathbb{R}^d}v e^{-\alpha f(v)}d\rho_t(v)}}^4}\right)^{\frac{1}{2}}\leq \frac{5L_u^2}{N}.
			\end{equation*}
			Similarly, we can derive 
			\begin{equation*}
				T_3 = \left(\Ep{\normm{{\frac{1}{N}\sum_{i=1}^Ne^{-\alpha f(\overbarscript{V}_t^i)}}-{\int_{\mathbb{R}^d}e^{-\alpha f(v)}d\rho_t(v)}}^4}\right)^{\frac{1}{2}}\leq\frac{5}{N}.
			\end{equation*}
			Collecting the bounds for the terms $T_1$, $T_2$ and $T_3$ and inserting them in \eqref{eq:zsafbip}, we obtain 
			\begin{equation} \label{eq:asdfinasdf}
				\begin{aligned}
					\Ep{\normm{\vrho-{v}_{\alpha}(\bar{\rho}_t)}^2}&\leq 10e^{6\alpha L_{\gamma}}\gC^{\frac{1}{2}}\!\!\mid_{\revised{\kappa}=\frac{1}{2\sqrt{\alpha L_{\gamma}}}}\left(L_u^2+\sup_{t\in[0,T^*]}\normmsq{\vrho}\right)\frac{1}{N}.
				\end{aligned}
			\end{equation}
			Since by Lemmas \ref{prop:1}, \ref{lem:7} and \ref{prop:2},  we know that $\normm{v_{\alpha}(\rho_t)}$
				can be uniformly bounded by a constant depending on $\alpha,\lambda,\sigma,d,R,v_b,\globmin,M,L_{\nu}$ and $\nu$ (see in particular \Cref{eq:65} that combines the aforementioned lemmas),
            we can conclude \eqref{eq:asdfinasdf} with
            \begin{equation}
				\Ep{\normm{\vrho-{v}_{\alpha}(\bar{\rho}_t)}^2}\leq \frac{C_0}{N}
			\end{equation}
			for some constant $C_0$ depends on $\lambda,\sigma,d,\alpha,L_{\nu},\nu,L_{\gamma},L_u,T^*,R,v_b,\globmin$ and $M$.
		\end{proof}
		
		\subsubsection{Upper Bound for the Third Term in \eqref{eq:error_decomposition}}
  
		In this section, we bound Term $III$ of the error decomposition~\eqref{eq:error_decomposition}.
        Before stating the main result of this section, Proposition \ref{thm:1}, we first need to provide two auxiliary lemmas, Lemma~\ref{lem:4} and Lemma~\ref{prop:1}.
		\begin{lemma}\label{lem:4}
			Let $R,M\in (0,\infty)$.
            \revised{Then it holds}
			\begin{equation}
				\begin{aligned}
					\frac{d}{dt}\Ep{ \normmsq{\bV_t-\globmin}}
                    &\leq -\lambda\Ep{ \normmsq{\bV_t-\globmin}}\\
                    &\quad\,+\lambda\left(\normmsq{\rhoM-\globmin}+\normmsq{\vrho-\rhoM}\right)\\
					&\quad\,+\sigma^2M^2d.
				\end{aligned}
			\end{equation}
			If further $\lambda\geq 2\sigma^2d$, we have
			\begin{equation}
            \begin{aligned}
				\frac{d}{dt}\Ep{ \normmsq{\bV_t-\globmin}}
                &\leq -\lambda\Ep{ \normmsq{\bV_t-\globmin}}\\
                &\quad\,+\lambda\left(\normmsq{\rhoM-\globmin}+\normmsq{\vrho-\rhoM}\right).
            \end{aligned}
			\end{equation}
		\end{lemma}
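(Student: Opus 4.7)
The plan is to apply It\^o's formula to the process $\|\bV_t-v^*\|^2$, take expectations, and split the contributions of drift and diffusion. Writing $P_t := \mathcal{P}_{v_b,R}(v_\alpha(\rho_t))$ and $v_t := v_\alpha(\rho_t)$ for brevity, the diffusion-driven martingale vanishes in expectation because Lemma~\ref{thm:2} (applied with $N=1$ and $\kappa$ chosen large enough) ensures finite second moments of $\bV_t$, so the stochastic integral is a true martingale. This leaves
\begin{equation*}
\frac{d}{dt}\Ep{\|\bV_t-v^*\|_2^2}
= -2\lambda\,\Ep{\langle \bV_t-v^*,\bV_t-P_t\rangle}
+ \sigma^2 d\,\Ep{(\|\bV_t-v_t\|_2\wedge M)^2}.
\end{equation*}

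For the drift I would use the polarization identity
\begin{equation*}
-2\langle \bV_t-v^*,\bV_t-P_t\rangle
= \|v^*-P_t\|_2^2 - \|\bV_t-v^*\|_2^2 - \|\bV_t-P_t\|_2^2,
\end{equation*}
which produces the desired $-\lambda\Ep{\|\bV_t-v^*\|_2^2}+\lambda\|P_t-v^*\|_2^2$ plus a \emph{bonus} non-positive term $-\lambda\Ep{\|\bV_t-P_t\|_2^2}$. This bonus term is the crux of the argument because it is exactly what can absorb the diffusion in the refined regime.

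For the unconditional bound I would simply estimate the diffusion crudely by $\sigma^2M^2d$, discard $-\lambda\Ep{\|\bV_t-P_t\|_2^2}\le 0$, and add the nonnegative $\lambda\|v_t-P_t\|_2^2$ on the right to match the stated form (since $v_t-P_t$ is deterministic, this is harmless). For the refined bound under $\lambda\ge 2\sigma^2d$, I would instead use the tighter estimate $(\|\bV_t-v_t\|_2\wedge M)^2 \le \|\bV_t-v_t\|_2^2 \le 2\|\bV_t-P_t\|_2^2 + 2\|P_t-v_t\|_2^2$. The first piece combines with the bonus to give $(-\lambda+2\sigma^2d)\Ep{\|\bV_t-P_t\|_2^2}\le 0$, while $2\sigma^2d\|P_t-v_t\|_2^2\le \lambda\|P_t-v_t\|_2^2$, delivering exactly the claimed inequality without the $\sigma^2 M^2 d$ remainder.

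The only real obstacle is the need to justify that the stochastic integral is a martingale (not merely a local martingale), which requires \emph{a priori} control on $\Ep{\|\bV_t-v^*\|_2^2}$; this is supplied by Lemma~\ref{thm:2}, but one should either invoke it as a hypothesis or first truncate $\bV_t$ by a localizing sequence of stopping times and pass to the limit. The remaining manipulations are elementary applications of the polarization identity and the triangle inequality, so beyond this technical point the argument is essentially bookkeeping of constants.
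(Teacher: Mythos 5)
Your proposal is correct and follows essentially the same route as the paper: It\^o's formula, the polarization identity to extract the term $-\lambda\,\Ep{\normmsq{\bV_t-\mathcal{P}_{v_b,R}(\vrho)}}$, a crude $\sigma^2M^2d$ bound for the first claim, and absorption of the diffusion into that negative term under $\lambda\ge 2\sigma^2 d$ for the second. The only (immaterial) difference is the direction of the Young/triangle step — the paper lower-bounds $\Ep{\normmsq{\bV_t-\rhoM}}$ by $\tfrac12\Ep{\normmsq{\bV_t-\vrho}}-\normmsq{\vrho-\rhoM}$ whereas you upper-bound $\normmsq{\bV_t-\vrho}$ by $2\normmsq{\bV_t-\rhoM}+2\normmsq{\vrho-\rhoM}$ — and your remark on justifying the martingale property via localization is a point the paper glosses over here.
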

		\begin{proof}
			%
			By It\^o's formula, we have
			\begin{equation*}
				\begin{aligned}
					d \normmsq{\bV_t-\globmin}&=2\left(\bV_t-\globmin\right)^{\top}d \bV_t+\sigma^2 d\left(\normmsq{\bV_t-\vrho}\wedge M^2\right)dt\\
					&=-2\lambda\inner{\bV_t-\globmin}{\bV_t-\rhoM}dt+2\sigma\left(\normm{\bV_t-\vrho}\wedge M\right)\left(\bV_t-\globmin\right)^{\top}dB_t\\
					&\quad\,+\sigma^2 d\left(\normmsq{\bV_t-\vrho}\wedge M^2\right)dt\\
					&=-\lambda \left[\normmsq{\bV_t-\globmin}+\normmsq{\bV_t-\rhoM}-\normmsq{\rhoM-\globmin}\right]dt\\
					&\quad\,+2\sigma\left(\normm{\bV_t-\vrho}\wedge M\right)\left(\bV_t-\globmin\right)^{\top}dB_t+\sigma^2 d\left(\normmsq{\bV_t-\vrho}\wedge M^2\right)dt,
				\end{aligned}
			\end{equation*}
			which, after taking the expectation on both sides, yields
			\begin{equation}\label{eq:55}
				\begin{aligned}
					\frac{d}{dt}\Ep{ \normmsq{\bV_t-\globmin}}&= -\lambda\Ep{ \normmsq{\bV_t-\globmin}}\\&\quad\,+\lambda \normmsq{\rhoM-\globmin}-\lambda\Ep{\normmsq{\bV_t-\rhoM}}\\
					&\quad\,+\sigma^2 d\Ep{\normmsq{\bV_t-\vrho}\wedge M^2}.
				\end{aligned}
			\end{equation}
			For the term $\Ep{\normmsq{\bV_t-\rhoM}}$, we notice that 
			\begin{equation*}
				\begin{aligned}
					\Ep{\normmsq{\bV_t-\rhoM}}&=\Ep{ \normmsq{\bV_t-\vrho}}+\Ep{\normmsq{\vrho-\rhoM}}\\
					&\quad\,+2\Ep{\inner{\bV_t-\vrho}{\vrho-\rhoM}}\\
					&\geq \Ep{ \normmsq{\bV_t-\vrho}}+\Ep{\normmsq{\vrho-\rhoM}}\\
					&\quad\,-\left(\frac{1}{2}\Ep{\normmsq{\bV_t-\vrho}}+2\Ep{\normmsq{\vrho-\rhoM}}\right)\\
					&=\frac{1}{2}\Ep{\normmsq{\bV_t-\vrho}}-\Ep{\normmsq{\vrho-\rhoM}},
				\end{aligned}
			\end{equation*}
			which, inserted into \Cref{eq:55}, allows to derive
			\begin{equation*}
				\begin{aligned}
					\frac{d}{dt}\Ep{ \normmsq{\bV_t-\globmin}}&\leq -\lambda\Ep{ \normmsq{\bV_t-\globmin}}\\&\quad\,+\lambda\left(\normmsq{\rhoM-\globmin}+\normmsq{\vrho-\rhoM}\right)\\
					&\quad\,-\frac{1}{2}\lambda\Ep{\normmsq{\bV_t-\vrho}}+\sigma^2 d\left(\normmsq{\bV_t-\vrho}\wedge M^2\right).
				\end{aligned}
			\end{equation*}
			From this we get for any $\lambda$ and $\sigma$ that
			\begin{equation}
				\begin{aligned}
					\frac{d}{dt}\Ep{ \normmsq{\bV_t-\globmin}}&\leq -\lambda\Ep{ \normmsq{\bV_t-\globmin}}\\&\quad\,+\lambda\left(\normmsq{\rhoM-\globmin}+\normmsq{\vrho-\rhoM}\right)\\
					&\quad\,+\sigma^2M^2d.
				\end{aligned}
			\end{equation}
			as well as
			\begin{equation}\label{eq:62}
				\begin{aligned}
					\frac{d}{dt}\Ep{ \normmsq{\bV_t-\globmin}}	&\leq -\lambda\Ep{ \normmsq{\bV_t-\globmin}}\\&\quad\,+\lambda\left(\normmsq{\rhoM-\globmin}+\normmsq{\vrho-\rhoM}\right)\\
					&\quad\,+\left(-\frac{1}{2}\lambda+\sigma^2d\right)\Ep{\normmsq{\bV_t-\vrho}}.
				\end{aligned}
			\end{equation}
			If $\lambda\geq 2\sigma^2d$, by \Cref{eq:62}, we get
			\begin{equation}
			\begin{aligned}
                \frac{d}{dt}\Ep{ \normmsq{\bV_t-\globmin}}&\leq -\lambda\Ep{ \normmsq{\bV_t-\globmin}}\\&\quad\,+\lambda\left(\normmsq{\rhoM-\globmin}+\normmsq{\vrho-\rhoM}\right).
            \end{aligned}
			\end{equation}
		\end{proof}
		\begin{remark}
			When $R=M=\infty$, we can show
			\begin{equation*}
            \begin{aligned}
				\frac{d}{dt}\Ep{\normmsq{{\bV}_t-\globmin}}&= -\lambda \Ep{\normmsq{{\bV}_t-\globmin}}\\&\quad\,+\lambda \normmsq{\vrho-\globmin}-\left(\lambda-\sigma^2d\right)\Ep{\normmsq{\bV_t-v_{\alpha}(\rho_t)}}.
			\end{aligned}
            \end{equation*}
			If further $\lambda\geq \sigma^2d$, we have
			\begin{equation*}
				\frac{d}{dt}\Ep{\normmsq{{\bV}_t-\globmin}}\leq -\lambda \Ep{\normmsq{{\bV}_t-\globmin}}+\lambda \normmsq{\vrho-\globmin}.
			\end{equation*}
           This differs from \cite[Lemma 18]{fornasier2021consensus}.
		\end{remark}
        \noindent
		The next result is a quantitative version of the Laplace principle as established in \cite[Proposition 21]{fornasier2021consensus}.
		\begin{lemma}\label{prop:1}
			For any $r>0$, define $f_r:= \sup _{v \in B_r\left(v^*\right)} f(v)$. Then, under the inverse continuity condition \ref{asp:22}, for any $r \in\left(0, R_0\right]$ and $q>0$ such that $q+f_r \leq f_{\infty}$, it holds
			\begin{equation}\label{eq:58}
				\left\|v_\alpha(\rho)-\globmin\right\|_2 \leq \frac{\left(q+f_r\right)^\nu}{L_{\nu}}+\frac{\exp (-\alpha q)}{\rho\left(B_r\left(\globmin\right)\right)} \int\normm{v-\globmin} d \rho(v)
			\end{equation}
		\end{lemma}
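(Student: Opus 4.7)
The plan is to apply Jensen's inequality followed by a careful split of the weighted integral defining $v_\alpha(\rho)$. By convexity of $\|\,\cdot\,\|_2$ and the fact that $\omega_\alpha(v)/\|\omega_\alpha\|_{L_1(\rho)}\,d\rho(v)$ is a probability measure,
\begin{equation*}
\left\|v_\alpha(\rho)-\globmin\right\|_2 \leq \int \normm{v-\globmin}\,\frac{\omega_\alpha(v)}{\|\omega_\alpha\|_{L_1(\rho)}}\,d\rho(v),
\end{equation*}
so it suffices to bound the right-hand side. Recalling the convention $\underline{f}=0$, I would split the integration domain as $S \cup S^c$, where $S:=\{v\in\bbR^d : f(v)\leq q+f_r\}$.

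On $S$, the assumption $q+f_r \leq f_\infty$ together with the second inequality of \ref{asp:22} forces $S\subset B_{R_0}(\globmin)$ (otherwise $f(v)>f_\infty \geq q+f_r$, contradicting $v\in S$). Hence the error bound inequality in \ref{asp:22} applies pointwise on $S$ and gives $\normm{v-\globmin}\leq (f(v))^\nu/L_\nu \leq (q+f_r)^\nu/L_\nu$. Integrating against the probability measure yields the contribution $(q+f_r)^\nu/L_\nu$, which is the first term of \eqref{eq:58}.

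On $S^c$, we have $f(v)>q+f_r$, so $\omega_\alpha(v)<\exp(-\alpha(q+f_r))$ pointwise. For the normalizing denominator, restrict the integral to $B_r(\globmin)$ on which $f(v)\leq f_r$ by definition of $f_r$; this gives the lower bound $\|\omega_\alpha\|_{L_1(\rho)} \geq \exp(-\alpha f_r)\,\rho(B_r(\globmin))$. Combining these two estimates,
\begin{equation*}
\int_{S^c} \normm{v-\globmin}\,\frac{\omega_\alpha(v)}{\|\omega_\alpha\|_{L_1(\rho)}}\,d\rho(v) \leq \frac{\exp(-\alpha(q+f_r))}{\exp(-\alpha f_r)\,\rho(B_r(\globmin))}\int_{S^c}\normm{v-\globmin}\,d\rho(v),
\end{equation*}
and the prefactor simplifies to $\exp(-\alpha q)/\rho(B_r(\globmin))$. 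Extending the remaining integral from $S^c$ to all of $\bbR^d$ (which can only enlarge it) produces the second term of \eqref{eq:58}, and summing the two contributions concludes the proof.

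There is no real obstacle here; the argument is a direct application of Jensen's inequality plus a sublevel-set split. The only point that requires minor care is checking that the hypothesis $q+f_r \leq f_\infty$ really does ensure $S\subset B_{R_0}(\globmin)$ so that the local error bound in \ref{asp:22} is available on $S$, and that the normalizing denominator is bounded below by restricting to $B_r(\globmin)$ rather than to $S$ itself.
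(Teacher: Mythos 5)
Your proof is correct and follows essentially the same route as the paper, which does not spell out the argument but defers to the quantitative Laplace principle of Fornasier--Klock--Riedl (Proposition 21 of the cited reference): Jensen's inequality, a split along the sublevel set $\{f \leq q + f_r\}$, the observation that $q + f_r \leq f_\infty$ confines this set to $B_{R_0}(\globmin)$ where the error bound of \ref{asp:22} applies, and the lower bound $\N{\omegaa}_{L_1(\rho)} \geq e^{-\alpha f_r}\rho(B_r(\globmin))$ on the normalizer. No gaps.
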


		\noindent
		With the above preparation, we can now upper bound Term~$III$.
        We have by Jensen's inequality
		\begin{equation}
			III=\Ep{\normmsq{\frac{1}{N} \sum_{i=1}^N \bV_{T^*}^i-\globmin}}\leq \frac{1}{N}\sum_{i=1}^N\Ep{\normmsq{ \bV_{T^*}^i-\globmin}},
		\end{equation}
		i.e., it is enough to upper bound $\Ep{\normmsq{ \bV_{T^*}-\globmin}}$, which is the content of the next statement.

		\begin{proposition}\label{thm:1}
			Let $\CE \in \CC(\bbR^d)$ satisfy \ref{asp:11}, \ref{asp:22} and \ref{asp:33}.
            Moreover, let $\rho_0\in\CP_4(\bbR^d)$ with $\globmin\in\supp(\rho_0)$.
			Fix any $\epsilon \in (0,W_2^2(\rho_0,\delta_{\globmin}))$ and define the time horizon
			$$T^*:=\frac{1}{\lambda}\log\left(\frac{2W_2^2(\rho_0,\delta_{\globmin})}{\epsilon}\right).$$
            Moreover, let $R\in(\normm{v_b-\globmin}+\sqrt{\epsilon/2},\infty)$, $M\in(0,\infty)$ and $\lambda,\sigma>0$ be such that $\lambda\geq 2\sigma^2d$ or $\sigma^2M^2d=\mathcal{O}(\epsilon)$.
            Then we can choose $\alpha$ sufficiently large, depending on $\lambda,\sigma,d,T^*,R,v_b,M,\epsilon$ and properties of $f$,
            such that $ \Ep{\normmsq{\bV_{T^*}-\globmin}} =\mathcal{O}(\epsilon)$.
		\end{proposition}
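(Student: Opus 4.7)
The plan is to derive an explicit exponential decay bound on $\Ep{\normmsq{\bV_t-\globmin}}$ from Lemma~\ref{lem:4} and then tune the parameters so that the residual terms are of order $\epsilon$. Integrating the differential inequality of Lemma~\ref{lem:4} via Gr\"onwall yields
\[
\Ep{\normmsq{\bV_{T^*}-\globmin}} \leq e^{-\lambda T^*}\Ep{\normmsq{\bV_0 - \globmin}} + \lambda\!\int_0^{T^*}\! e^{-\lambda(T^*-s)}\!\left(\normmsq{\rhoMm - \globmin} + \normmsq{\vrhos - \rhoMm}\right)ds,
\]
plus, in the regime $\lambda<2\sigma^2 d$, an additional contribution bounded by $\sigma^2 M^2 d/\lambda$, which is $\CO(\epsilon)$ under the alternative assumption $\sigma^2 M^2 d = \CO(\epsilon)$. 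The definition of $T^*$ makes the boundary term equal to exactly $\epsilon/2$ since $\Ep{\normmsq{\bV_0-\globmin}}=W_2^2(\rho_0,\delta_{\globmin})$. The task reduces to bounding the integrand by $\CO(\epsilon)$ uniformly in $s\in[0,T^*]$.

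Next I would simplify the projection terms. Since $\globmin\in B_R(v_b)$ is a fixed point of the $1$-Lipschitz projection $\mathcal{P}_{v_b,R}$, one has $\normm{\rhoMm - \globmin}\leq \normm{\vrhos - \globmin}$. Moreover, whenever $\normm{\vrhos-\globmin}<\sqrt{\epsilon/2}$, the hypothesis $R>\normm{v_b-\globmin}+\sqrt{\epsilon/2}$ forces $\vrhos\in B_R(v_b)$, so $\rhoMm=\vrhos$ and $\normmsq{\vrhos - \rhoMm}=0$. It thus suffices to prove $\normm{\vrhos - \globmin}<\sqrt{\epsilon/2}$ uniformly on $[0,T^*]$ for $\alpha$ sufficiently large. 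For this I would invoke Lemma~\ref{prop:1}, which gives
\[
\normm{\vrhos - \globmin} \leq \frac{(q+f_r)^\nu}{L_\nu} + \frac{e^{-\alpha q}}{\rho_s(B_r(\globmin))}\int\normm{v - \globmin}\, d\rho_s(v).
\]
Using assumption \ref{asp:33} (which yields $f_r\leq L_\gamma r^{1+\gamma}$), $r$ and $q$ can be picked small enough so that the first term on the right is at most $\tfrac14\sqrt{\epsilon/2}$. The integral factor in the second term is bounded uniformly on $[0,T^*]$ by the sub-Gaussianity in Lemma~\ref{thm:2} (applied in the mean-field case $N=1$).

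The main technical obstacle is then the uniform-in-time lower bound on $\rho_s(B_r(\globmin))$: since $r$ has to be taken as a power of $\epsilon$ to render the first Laplace term negligible, a plain Markov-type estimate is too weak, because $\Ep{\normmsq{\bV_s-\globmin}}$ is only guaranteed to be small at time $T^*$, not uniformly on $[0,T^*]$. The approach I would adopt mirrors Proposition~24 of~\cite{fornasier2021consensus}: pick a smooth nonnegative mollifier $\phi$ supported in $B_r(\globmin)$ and derive a one-sided differential inequality for $\frac{d}{dt}\!\int\!\phi\,d\rho_s$ from the Fokker--Planck equation associated to~\eqref{eq:1}. The truncation of the noise bounds the diffusion coefficient by $\sigma M$ and the projection keeps the drift bounded, so the mass inside $B_r(\globmin)$ can only decrease at a controlled exponential rate, yielding a positive lower bound $\rho_s(B_r(\globmin))\geq c(\rho_0,T^*,r,\lambda,\sigma,M)>0$ uniform in $s\in[0,T^*]$. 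With such a lower bound, choosing $\alpha$ large pushes the second Laplace term below $\tfrac14\sqrt{\epsilon/2}$ as well, hence $\normm{\vrhos-\globmin}<\sqrt{\epsilon/2}$ uniformly on $[0,T^*]$, the integrand in the Gr\"onwall inequality is $\CO(\epsilon)$, and the claim follows. A continuity/stopping-time bootstrap seals the argument: defining $T$ as the first time the above Laplace estimate fails, propagating the Gr\"onwall bound on $[0,T]$, and exploiting the fact that the resulting improved bound strictly dominates the bootstrap threshold forces $T=T^*$. Calibrating $r$, $q$, and $\alpha$ consistently with this Fokker--Planck-based lower bound is where the technical work concentrates.
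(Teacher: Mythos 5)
Your proposal follows essentially the same route as the paper: a quantitative Laplace principle (Lemma~\ref{prop:1}) combined with a Fokker--Planck test-function lower bound on $\rho_s(B_r(\globmin))$ (Lemma~\ref{prop:2}) shows $\normm{\vrhos-\globmin}<\sqrt{\epsilon/2}$ uniformly on $[0,T^*]$ for $\alpha$ large, so the projection is inactive and Lemma~\ref{lem:4} plus Gr\"onwall yields the claim. The only cosmetic differences are that the paper bounds $\int\normm{v-\globmin}\,d\rho_s(v)$ by the elementary second-moment estimate of Lemma~\ref{lem:7} rather than sub-Gaussianity, and it needs no stopping-time bootstrap since the mass lower bound of Lemma~\ref{prop:2} holds unconditionally on $[0,T^*]$.
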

		
		\begin{proof}
			We only prove the case $\lambda\geq 2\sigma^2d$ in detail.
            The case $\sigma^2M^2d=\mathcal{O}(\epsilon)$ follows similarly.
			
			According to Lemmas~\ref{prop:1} and \ref{prop:2}, we have
			\begin{equation}\label{eq:65}
				\begin{aligned}
					\left\|\vrho-\globmin\right\|_2 &\leq \frac{\left(q+f_r\right)^\nu}{L_{\nu}}+\frac{\exp (-\alpha q)}{\rho_t\left(B_r\left(\globmin\right)\right)} \Ep{\normm{\bV_t-\globmin}} \\
					&\leq \frac{\left(q+f_r\right)^\nu}{L_{\nu}}+{\exp(-\alpha q)}\cin\cfm,
				\end{aligned}
			\end{equation}
			where $\cin:=(\exp{q'T^*})/{C_{4}}<\infty$, $q'$ and $C_4$ are from Lemma~\ref{prop:2}, and where, as of Lemma~\ref{lem:7}, $\cfm:=\sup_{[0,T^*]}\Ep{\normm{\overbar{V}_t-v^*}}<\infty$.
            In what follows, let us deal with the two terms on the right-hand side of \eqref{eq:65}.
            For the term ${\left(q+f_r\right)^\nu}/{L_{\nu}}$, let $q=f_r$. Then by \ref{asp:22} and \ref{asp:33}, we can choose proper $r$, such that $\revised{2(L_{\nu} r)^{{1}/{\nu}}}\leq2f_r\leq f_{\infty}$.
            Further by \ref{asp:33}, we have
			\begin{equation*}
				\frac{\left(q+f_r\right)^\nu}{L_{\nu}}=\frac{(2f_r)^{\nu}}{L_{\nu}}\leq 	\frac{(2L_{\gamma})^{\nu}r^{(1+\gamma)\nu}}{L_{\nu}},
			\end{equation*}
			so if
            \begin{equation*}
				r< r_0:=\min\left\{\left(\frac{\epsilon}{8}\right)^{\frac{1}{2(1+\gamma)\nu}}\left(\frac{L_{\nu}}{(2L_{\gamma})^{\nu}}\right)^{\frac{1}{(1+\gamma)\nu}},\sqrt{\frac{\epsilon}{2}}\right\},
			\end{equation*}  we can bound
			\begin{equation*}
				\frac{\left(q+f_r\right)^\nu}{L_{\nu}}=\frac{(2f_r)^{\nu}}{L_{\nu}}\leq \frac{\sqrt{\epsilon}}{2\sqrt{2}}.
			\end{equation*} 
			For term ${\exp(-\alpha q)}\cin\cfm$, we can choose $\alpha$ large enough such that 
			\begin{equation*}
				{\exp(-\alpha q)}\cin\cfm\leq \frac{\sqrt{\epsilon}}{2\sqrt{2}}.
			\end{equation*}
			%
			With these choices of $r$ and $\alpha$ and by integrating them into \Cref{eq:65}, we obtain
			\begin{equation*}
				\normmsq{\vrho-\globmin}<\frac{\epsilon}{2},
			\end{equation*}
			for all $t\in[0,T^*]$, and thus
			\begin{equation*}
				\normm{v_{\alpha}(\rho_t)-v_b}\leq \normm{v_{\alpha}(\rho_t)-\globmin}+\normm{\globmin-v_b}\leq \sqrt{\frac{\epsilon}{2}}+\normm{\globmin-v_b}\leq R.
			\end{equation*}
			Consequently, by Lemma \ref{lem:4}, we have
			\begin{equation*}
			\begin{aligned}
			    \frac{d}{dt}\Ep{ \normmsq{\bV_t-\globmin}}&\leq-\lambda\left(\Ep{\normmsq{\bV_t-\globmin}}-\normmsq{\vrho-\globmin}\right)\\&\leq -\lambda\left(\Ep{\normmsq{\bV_t-\globmin}}-\frac{\epsilon}{2}\right),
			\end{aligned}
			\end{equation*}
			since now $\mathcal{P}_{v_b,R}(v_{\alpha}(\rho_t))=v_{\alpha}(\rho_t)$. Finally by Gr\"onwall's inequality, $\Ep{\normmsq{\bV_{T^*}-\globmin}}\leq \epsilon$.
				\end{proof}
				
				\begin{lemma}\label{lem:7}
                Let $\normm{v_b-\globmin}<R<\infty$ and $0<M<\infty$. \revised{Then it holds} 
					\begin{equation}
						\sup_{t\in[0,T^*]}\Ep{\normm{\bV_t-\globmin}}\leq \sqrt{\max\left\{\Ep{ \normmsq{\bV_0-\globmin}},\lambda R^2+\sigma^2M^2d\right\}}.
					\end{equation}
				\end{lemma}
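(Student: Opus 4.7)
The plan is to apply It\^o's formula to $\normmsq{\bV_t-\globmin}$, take expectations to remove the martingale term, and then derive a linear differential inequality of the form $y'(t)\leq -\lambda y(t) + C$ for $y(t):=\Ep{\normmsq{\bV_t-\globmin}}$. A Gr\"onwall/comparison argument will then yield the desired uniform bound on $y(t)$, and Jensen's inequality will convert this into a bound on $\Ep{\normm{\bV_t-\globmin}}$.

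Concretely, It\^o's formula applied to $\normmsq{\,\cdot\,-\globmin}$ at $\bV_t$ gives (exactly as in the derivation of Lemma~\ref{lem:4})
\begin{equation*}
\frac{d}{dt}\Ep{\normmsq{\bV_t-\globmin}}=-2\lambda\,\Ep{\inner{\bV_t-\globmin}{\bV_t-\rhoM}}+\sigma^{2}d\,\Ep{\normmsq{\bV_t-\vrho}\wedge M^{2}}.
\end{equation*}
The diffusion term is immediately controlled by $\sigma^{2}M^{2}d$ thanks to the truncation. For the drift I would split $(\bV_t-\globmin)^{\top}(\bV_t-\rhoM)=\normmsq{\bV_t-\globmin}+(\bV_t-\globmin)^{\top}(\globmin-\rhoM)$ and exploit the fact that both $\globmin$ and $\rhoM$ lie in $B_{R}(v_{b})$ (the former by assumption, the latter by definition of the projection $\mathcal{P}_{v_b,R}$), so that $\normm{\globmin-\rhoM}$ admits a uniform deterministic bound proportional to $R$. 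A Cauchy--Schwarz and Young's inequality step then absorbs a portion of $\normmsq{\bV_t-\globmin}$, leaving an estimate of the shape
\begin{equation*}
\frac{d}{dt}\Ep{\normmsq{\bV_t-\globmin}}\leq -\lambda\Ep{\normmsq{\bV_t-\globmin}}+\lambda R^{2}+\sigma^{2}M^{2}d.
\end{equation*}

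The last ingredient is a simple comparison: either $\Ep{\normmsq{\bV_t-\globmin}}$ sits below the equilibrium $\lambda R^{2}+\sigma^{2}M^{2}d$ of the majorizing ODE, in which case it stays there for all $t\geq 0$, or it starts above and decays monotonically towards this level. In either case
\begin{equation*}
\sup_{t\in[0,T^*]}\Ep{\normmsq{\bV_t-\globmin}}\leq\max\!\left\{\Ep{\normmsq{\bV_0-\globmin}},\,\lambda R^{2}+\sigma^{2}M^{2}d\right\},
\end{equation*}
and Jensen's inequality $\Ep{\normm{X}}\leq\sqrt{\Ep{\normmsq{X}}}$ yields the stated estimate.

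The main obstacle is a careful bookkeeping of constants in the Young step so that the right-hand side matches exactly the form $\lambda R^{2}+\sigma^{2}M^{2}d$; the rest of the argument is standard once the two stabilising ingredients of the truncated model --- the projection onto $B_{R}(v_{b})$, which caps the drift towards $\rhoM$, and the truncation by $M$, which caps the diffusion coefficient --- are used in tandem. Without either of them there would be no hope of extracting a time-uniform bound, which is precisely the structural advantage of the variant analysed in this paper over standard CBO.
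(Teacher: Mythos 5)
Your proposal is correct and takes essentially the same route as the paper: the paper's proof of Lemma~\ref{lem:7} simply reuses the It\^o computation \eqref{eq:55} from the proof of Lemma~\ref{lem:4}, bounds the drift contribution via the projection onto $B_R(v_b)$ and the diffusion via the truncation $M$, and closes with the same comparison/Gr\"onwall and Jensen steps. (Both your write-up and the paper's share the same mild imprecision about constants --- e.g.\ $\normm{\rhoM-\globmin}$ is only bounded by $2R$ via the triangle inequality, and the equilibrium of the majorizing ODE is $\left(\lambda R^{2}+\sigma^{2}M^{2}d\right)/\lambda$ rather than $\lambda R^{2}+\sigma^{2}M^{2}d$ --- which changes the constant in the final bound but not the structure of the argument.)
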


				\begin{proof}
					By \Cref{eq:55} we have
					\begin{equation*}
						\begin{aligned}
							\frac{d}{dt}\Ep{ \normmsq{\bV_t-\globmin}}&\leq -\lambda\Ep{ \normmsq{\bV_t-\globmin}}\\&\quad\,+\lambda \normmsq{\rhoM-\globmin}-\lambda\Ep{\normmsq{\bV_t-\rhoM}}\\
							&\quad\,+\sigma^2 d\Ep{\normmsq{\bV_t-\vrho}\wedge M^2}\\
							&\leq -\lambda\Ep{ \normmsq{\bV_t-\globmin}}+\lambda R^2+\sigma^2M^2d,
						\end{aligned}
					\end{equation*}
					yielding 
					\begin{equation*}
						\Ep{ \normmsq{\bV_t-\globmin}}\leq\max\left\{\Ep{ \normmsq{\bV_0-\globmin}},\lambda R^2+\sigma^2M^2d\right\},
					\end{equation*}
					after an application of Gr\"onwall's inequality for any $t\geq 0$.
				\end{proof}

				\begin{lemma}\label{prop:2}
                    For any $M\in (0,\infty)$, $\tau\geq1$, $r>0$ and $R\in(\normm{v_b-\globmin}+r,\infty)$ it holds
					\begin{equation*}
						\begin{aligned}
							\rho_t\left(B_r\left(v^*\right)\right) & \geq C_4\exp (-q' t)>0,
						\end{aligned}
					\end{equation*}
					where \begin{eqnarray*}
						&C_4:=\int_{B_r(\globmin)}1+(\tau-1)\normm{\frac{v-v^*}{r}}^{\tau}-\tau\normm{\frac{v-v^*}{r}}^{\tau-1} d \rho_0(v)
					\end{eqnarray*} and where $q'$ depends on $\tau,\lambda,\sigma,d,r,R,v_b$ and $M$.
				\end{lemma}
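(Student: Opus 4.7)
The plan is to construct a smooth mollifier~$\phi$ supported in $B_r(v^*)$ such that $0\le\phi\le\mathbb{1}_{B_r(v^*)}$ and then track $\int\phi\,d\rho_t$ via the Fokker--Planck equation. The very form of $C_4$ suggests the explicit choice
\begin{equation*}
\phi(v):=\left(1+(\tau-1)\left\|\tfrac{v-v^*}{r}\right\|_2^{\tau}-\tau\left\|\tfrac{v-v^*}{r}\right\|_2^{\tau-1}\right)\mathbb{1}_{B_r(v^*)}(v),
\end{equation*}
so that $C_4=\int\phi\,d\rho_0$. I would first verify that $\phi\in\mathcal{C}^2(\mathbb{R}^d)$ (for $\tau\geq 1$ chosen large enough to ensure $\mathcal{C}^2$ regularity at the boundary), with $\phi(v^*)=1$, $\phi\equiv 0$ outside $B_r(v^*)$, and vanishing first derivatives on $\partial B_r(v^*)$. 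Since $\phi\le\mathbb{1}_{B_r(v^*)}$, it suffices to prove a Gr\"onwall-type lower bound $\int\phi\,d\rho_t\geq C_4\exp(-q't)$.

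Next, I would apply It\^o's formula to $\phi(\bV_t)$ and take expectations to obtain
\begin{equation*}
\frac{d}{dt}\int\phi\,d\rho_t=\int\left[-\lambda\nabla\phi(v)\cdot\bigl(v-\mathcal{P}_{v_b,R}(\vrho)\bigr)+\tfrac{\sigma^2}{2}\bigl(\|v-\vrho\|_2\wedge M\bigr)^2\Delta\phi(v)\right]d\rho_t(v),
\end{equation*}
where only values $v\in B_r(v^*)$ contribute. The key step is then to establish a pointwise inequality of the form
\begin{equation*}
-\lambda\nabla\phi(v)\cdot\bigl(v-\mathcal{P}_{v_b,R}(\vrho)\bigr)+\tfrac{\sigma^2 M^2}{2}\Delta\phi(v)\geq-q'\,\phi(v)
\end{equation*}
uniformly in $v\in B_r(v^*)$, for some constant $q'=q'(\tau,\lambda,\sigma,d,r,R,v_b,M)$. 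Here one uses that both $v\in B_r(v^*)\subset B_{\|v_b-v^*\|+r}(v_b)$ and $\mathcal{P}_{v_b,R}(\vrho)\in B_R(v_b)$ lie in a ball of radius at most $R+\|v_b-v^*\|+r\leq 2R$ around each other (since $R>\|v_b-v^*\|+r$), so $\|v-\mathcal{P}_{v_b,R}(\vrho)\|_2$ is uniformly bounded. Likewise the diffusion coefficient is bounded by $M^2$.

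The technical heart is the pointwise lower bound. A direct computation of $\nabla\phi$ and $\Delta\phi$ on $B_r(v^*)\setminus\{v^*\}$ shows both have the structure $(\text{polynomial in }\|v-v^*\|/r)\cdot(v-v^*)/\|v-v^*\|_2$ (resp.\ radial powers), and one factors out the zeroing behaviour at the boundary $\|v-v^*\|_2=r$ to compare them against $\phi(v)$ itself. Dividing by the maximal possible coefficients of the drift and diffusion (which contribute the dependence on $\lambda,\sigma,d,r,R,v_b,M$), one extracts a constant $q'$ that makes the above pointwise inequality hold. Integrating against $\rho_t$ yields $\tfrac{d}{dt}\int\phi\,d\rho_t\geq -q'\int\phi\,d\rho_t$, whence Gr\"onwall gives $\int\phi\,d\rho_t\geq C_4\exp(-q't)$ and the bound $\rho_t(B_r(v^*))\geq\int\phi\,d\rho_t$ completes the proof. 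The main obstacle is handling the singularity of $\Delta\phi$ near $v=v^*$ (arising from the radial $\|\cdot\|^{\tau-1}$ term when $\tau$ is close to $1$) and ensuring the polynomial comparison with $\phi$ near the boundary $\partial B_r(v^*)$ where $\phi$ vanishes; both are addressed by choosing $\tau$ appropriately and exploiting the double zero of $\phi$ on the boundary.
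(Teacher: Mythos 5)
Your scaffolding coincides with the paper's: the same test function $\phi_r^\tau$, the lower bound $\rho_t(B_r(\globmin))\geq\int\phi_r^{\tau}(v-\globmin)\,d\rho_t(v)$, differentiation through the Fokker--Planck equation, and a Gr\"onwall closure. The gap sits in your ``key step''. First, the reduction to an inequality with coefficient $\sigma^2M^2/2$ in front of $\Delta\phi$ goes the wrong way where it matters: writing $c(v):=\left(\normm{v-\vrho}\wedge M\right)^2\leq M^2$, the bound $c(v)\Delta\phi\geq M^2\Delta\phi$ holds only where $\Delta\phi\leq0$, whereas a direct computation gives, with $s=\normm{v-\globmin}/r$,
\begin{equation*}
\Delta\phi_r^{\tau}(v-\globmin)=\frac{\tau(\tau-1)}{r^2}\,s^{\tau-3}\bigl((\tau+d-2)s-(\tau+d-3)\bigr),
\end{equation*}
which is \emph{positive} in an annulus near $\partial B_r(\globmin)$ --- precisely where you need the diffusion's positive contribution --- so there your substitution discards the helpful term instead of bounding it from below. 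Second, the uniform pointwise bound $\geq-q'\phi(v)$ cannot hold up to the boundary in any case: $\phi_r^\tau$ has a double zero at $s=1$ while its gradient has only a simple zero, so the drift contribution is generically of order $(1-s)$ and cannot be absorbed into $-q'(1-s)^2$. The paper avoids this by proving that the \emph{full} integrand is nonnegative in a boundary layer $s>1-\epsilon$, and uses the $-q'\phi$ comparison only in the interior $s\leq1-\epsilon$, where $\phi_r^\tau$ is bounded away from zero.

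Third --- and this is the part your sketch does not address --- establishing nonnegativity in the boundary layer is delicate because the diffusion coefficient $\normm{v-\vrho}^2\wedge M^2$ degenerates when $\vrho$ is close to $v$, so there is no uniform positive lower bound to play against the drift. The resolution is a two-regime interplay: if $\normm{v-\vrho}\gtrsim(1-s)$ the diffusion term (of order $\normm{v-\vrho}^2$) dominates the drift (of order $(1-s)\normm{v-\vrho}$), while if $\normm{v-\vrho}\lesssim(1-s)$ the drift is itself $O((1-s)^2)=O(\phi)$. The paper packages exactly this into a case distinction on whether $\vrho\in B_R(v_b)$ and whether $\normm{v-\vrho}\leq M$, with the degenerate case settled by the exact quadratic-form inequality of Lemma~\ref{lem:333} applied to $\langle v-\vrho,v-\globmin\rangle$ and $\normm{v-\vrho}^2$. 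Without this (or an equivalent argument) the boundary-layer estimate does not close, so the proposal has a genuine gap at its central step. (A minor further point: your requirement $\phi\in\mathcal{C}^2$ fails across $\partial B_r(\globmin)$ for every $\tau>1$, since the radial second derivative from inside equals $\tau(\tau-1)/r^2\neq0$ while it vanishes outside; the paper only needs $\phi_r^\tau\in\mathcal{C}^1_c$ and works with the weak formulation via Green's formula.)
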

				\begin{proof}
                    Recall that the law~$\rho_t$ of $\bV_t$ satisfies the Fokker-Planck equation
					\begin{equation*}
						\partial_t \rho_t=\lambda \operatorname{div}\left(\left(v-\rhoM\right)\rho_t\right)+\frac{\sigma^2}{2} \Delta\left(\left(\left\|v-v_\alpha\left(\rho_t\right)\right\|^2\wedge M^2\right) \rho_t\right).
					\end{equation*}
					Let us first define for $\tau\geq1$ the test function 
					\begin{equation}
						\phi_r^{\tau}(v):=\begin{cases}
							1+(\tau-1)\normm{\frac{v}{r}}^{\tau}-\tau\normm{\frac{v}{r}}^{\tau-1},& \normm{v}\leq r,\\
							0,& \text{else},
						\end{cases}
					\end{equation}
					for which it is easy to verify that $\phi_r^{\tau}\in \mathcal{C}_c^1(\mathbb{R}^d,[0,1])$.
					Since $\mathrm{Im}\,\phi_r^{\tau}\subset [0,1]$, we have $\rho_t(B_r(\globmin))\geq \int_{B_r(\globmin)}\phi_r^{\tau}(v-\globmin)\,d\rho_t(v)$.
                    To lower bound $\rho_t(B_r(\globmin))$, it is thus sufficient to establish a lower bound on $\int_{B_r(\globmin)}\phi_r^{\tau}(v-\globmin)\,d\rho_t(v)$.
                    By Green's formula
					\begin{equation*}
						\begin{aligned}
							&\frac{d}{dt}\int_{B_r(\globmin)}\phi_r^{\tau}(v-\globmin)\,d\rho_t(v)=-\lambda\int_{B_r(\globmin)}\inner{v-\rhoM}{\nabla\phi_r^{\tau}(v-\globmin)}d\rho_t(v)\\
							&\quad\,\quad+\frac{\sigma^2}{2}\int_{B_r(\globmin)}\left(\normmsq{v-\vrho}\wedge M^2\right)\Delta\phi_r^{\tau}(v-\globmin)d\rho_t(v)\\
							&\quad\,=\tau(\tau-1)\int_{B_r(\globmin)}\frac{\normm{v-\globmin}^{\tau-3}}{r^{\tau-3}}\Bigg(\!\left(1-\frac{\normm{v-\globmin}}{r}\right)\bigg(\lambda\inner{\frac{v-\rhoM}{r}}{\frac{v-\globmin}{r}}\\
							&\quad\,\quad-\frac{\sigma^2}{2}\left(d+\tau-2\right)\frac{\normmsq{v-\vrho}\wedge M^2}{r^2}\bigg)+\frac{\sigma^2}{2}\frac{\normmsq{v-\vrho}\wedge M^2}{r^2}\Bigg)d\rho_t(v).		
						\end{aligned}
					\end{equation*}
					For simplicity, let us abbreviate
					\begin{equation*}
						\begin{aligned}
							\Theta:&=\left(1-\frac{\normm{v-\globmin}}{r}\right)\bigg(\lambda\inner{\frac{v-\rhoM}{r}}{\frac{v-\globmin}{r}}\\
							&\quad-\frac{\sigma^2}{2}\left(d+\tau-2\right)\frac{\normmsq{v-\vrho}\wedge M^2}{r^2}\bigg)
							+\frac{\sigma^2}{2}\frac{\normmsq{v-\vrho}\wedge M^2}{r^2}.
						\end{aligned}
					\end{equation*}
					We can choose $\epsilon_1$ small enough, depending on $\tau$ and $d$, such that when ${\normm{v-\globmin}}/{r}> 1-\epsilon_1$, we have
					\begin{equation*}
						\begin{aligned}
							\Theta&\revised{=\left(1-\frac{\normm{v-\globmin}}{r}\right)\lambda\inner{\frac{v-\rhoM}{r}}{\frac{v-\globmin}{r}}}\\
							&\revised{\quad\,+\bigg(\frac{\sigma^2}{2}-\left(1-\frac{\normm{v-\globmin}}{r}\right)\frac{\sigma^2}{2}(d+\tau-2)\bigg)\frac{\normmsq{v-\vrho}\wedge M^2}{r^2}}\\
							&\geq \left(1-\frac{\normm{v-\globmin}}{r}\right)\lambda\inner{\frac{v-\rhoM}{r}}{\frac{v-\globmin}{r}}
							+\frac{\sigma^2}{3}\frac{\normmsq{v-\vrho}\wedge M^2}{r^2},
						\end{aligned}
					\end{equation*}
				    \revised{where the last inequality works if ${\normm{v-\globmin}}/{r}\geq 1-1/(6(d+\tau-2))$.}
        
					If $v_{\alpha}(\rho_t)\not\in B_{R}(v_b)$, we have $\abs{\inner{v-\rhoM}{v-\globmin}}/r^2\leq C(r,R,v_b)$ and, since $R>\normm{v_b-\globmin}+r$, $({\normmsq{v-\vrho}\wedge M^2})/{r^2}\geq C(r,M,R,v_b)$,  which allows to choose $\epsilon_2$ small enough, depending on $\lambda,r,\sigma,R,v_b$ and $M$, such that $\Theta>0$ when ${\normm{v-\globmin}}/{r}> 1-\min\{\epsilon_1,\epsilon_2\}$. 
                    
                    If $v_{\alpha}(\rho_t)\in B_{R}(v_b)$ and $\normm{v-\vrho}\leq M$, we have by Lemma \ref{lem:333}
					\begin{equation*}
						\begin{aligned}
							\Theta&\geq \left(1-\frac{\normm{v-\globmin}}{r}\right)\lambda\inner{\frac{v-v_{\alpha}(\rho_t)}{r}}{\frac{v-\globmin}{r}}
							+\frac{\sigma^2}{3}\frac{\normmsq{v-\vrho}}{r^2}\\
							&=\left(\frac{\sigma^2}{3}+\left(1-\frac{\normm{v-\globmin}}{r}\right)\lambda\right)\frac{\normmsq{v-\globmin}}{r^2}+\frac{\sigma^2}{3}\frac{\normmsq{v_{\alpha}(\rho_t)-\globmin}}{r^2}\\
							&\quad\,-\left(\frac{2\sigma^2}{3}+\left(1-\frac{\normm{v-\globmin}}{r}\right)\lambda\right)\inner{\frac{v_{\alpha}(\rho_t)-\globmin}{r}}{\frac{v-\globmin}{r}}\\
							&\geq 0,
						\end{aligned}
					\end{equation*}
                    when $\normm{v-\globmin}/r\in \left[1-2\sigma^2/(3\lambda),1\right]$.
					
                    If $v_{\alpha}(\rho_t)\in B_{R}(v_b)$ and $\normm{v-\vrho}> M$, we have
					\begin{equation*}
						\Theta\geq \left(1-\frac{\normm{v-\globmin}}{r}\right)C(\lambda,r,R,v_b)+\frac{\sigma^2}{3}M^2,
					\end{equation*}
					i.e., we can choose $\epsilon_3$ small enough, depending on $\lambda,r,\sigma,R,v_b$ and $M$, such that $\Theta\geq0$ when ${\normm{v-\globmin}}/{r}> 1-\min\{\epsilon_1,\epsilon_2,\epsilon_3,{2\sigma^2}/{3\lambda}\}$.
     
                    Combining the cases from above, we conclude that $\Theta\geq 0$ when ${\normm{v-\globmin}}/{r}\geq 1-\min\{\epsilon_1,\epsilon_2,\epsilon_3,{2\sigma^2}/{3\lambda}\}$.
					On the other hand, when ${\normm{v-\globmin}}/{r}\leq 1-\min\{\epsilon_1,\epsilon_2,\epsilon_3,{2\sigma^2}/{3\lambda}\}$, we have
					\begin{equation*}
						\begin{aligned}
							\tau(\tau-1)\frac{\normm{v-\globmin}^{\tau-3}}{r^{\tau-3}}\Theta
							&=\tau(\tau-1)\frac{\normm{v-\globmin}^{\tau-3}}{r^{\tau-3}}\frac{\Theta}{\phi_r^{\tau}(v)}\phi_r^{\tau}(v-\globmin)\geq -C_5\phi_r^{\tau}(v-\globmin)
						\end{aligned}
					\end{equation*}
					for some constant $C_5$ depending on $r,R,M,v_b,\lambda,\sigma,d$ and $\tau$, since $\abs{\Theta}$ is upper bounded and $\phi_r^{\tau}(v-v^*)\geq \phi_r^{\tau}((1-\min\{\epsilon_1,\epsilon_2,\epsilon_3,{2\sigma^2}/{3\lambda}\})r)>0$ for any $v$ satisfies ${\normm{v-\globmin}}/{r}\leq 1-\min\{\epsilon_1,\epsilon_2,\epsilon_3,{2\sigma^2}/{3\lambda}\}$.
					
					All in all we have
					\begin{equation*}
						\begin{aligned}
							\frac{d}{dt}\int_{B_r(\globmin)}\phi_r^{\tau}(v-v^*)\,d\rho_t(v)\geq-q'\int_{B_r(\globmin)} \phi_r^{\tau}(v-v^*)\,d\rho_t(v),
						\end{aligned}
					\end{equation*}
					where $q':=\max\{C_5,0\}$.
					By Gr\"onwall's inequality, we thus have
					\begin{equation*}
						\rho_t(B_r(\globmin))\geq \int_{B_r(\globmin)}\phi_r^{\tau}(v-v^*)\,d\rho_t(v)\geq e^{-q't}\int_{B_r(\globmin)}\phi_r^{\tau}(v-v^*)\,d\rho_0(v),
					\end{equation*}
                   which concludes the proof.
				\end{proof}
				\begin{lemma}
					\label{lem:333} Let $a,b>0$. Then we have
					\begin{equation*}
						(a+b(1-x))x^2+ay^2-(2a+b(1-x))xy\geq 0,
					\end{equation*}
					for any $x\in [1-{2a}/{b},1]\cap(0,\infty)$ and $y\geq0$.
				\end{lemma}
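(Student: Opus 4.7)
My plan is to prove the inequality by factorizing the quadratic form. Setting $s := 1-x \geq 0$ (so the hypothesis $x \in [1-2a/b, 1]$ translates to $s \in [0, 2a/b]$) and $c := a + bs$, the target expression becomes $cx^2 + ay^2 - (a+c)xy$. Viewed as a quadratic in $y$, its discriminant is $(a+c)^2x^2 - 4acx^2 = (c-a)^2 x^2$, which is a perfect square. This strongly suggests a clean factorization, and indeed direct expansion verifies
\[
(a+b(1-x))x^2 + ay^2 - (2a+b(1-x))xy = (x-y)\bigl((a+b(1-x))x - ay\bigr).
\]

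With the factorization in hand, the proof reduces to a sign analysis of the two linear factors. Since $b(1-x) \geq 0$ gives $c \geq a$, the two roots (in $y$) are $y_1 := x$ and $y_2 := cx/a$, with $y_1 \leq y_2$. In the regime $y \leq y_1$, both $y - x$ and $(a+b(1-x))x - ay \geq (a+b(1-x))x - ax = b(1-x)x$ point in compatible ways, yielding a nonnegative product. In the regime $y \geq y_2$, both factors flip sign together and the product is again nonnegative.

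The subtle range is the intermediate one, $y_1 \leq y \leq y_2$, where a direct factor-by-factor analysis leaves a sign ambiguity. This is what I expect to be the main technical obstacle. The hypothesis $x \geq 1 - 2a/b$ gives $bs \leq 2a$, hence $c/a \leq 3$, so the delicate window is confined to $y \in [x, 3x]$. To close the argument on this window, my plan would be either to complete the square using the full strength of $2a \geq b(1-x)$, rewriting the expression as $a(x-y)^2 + b(1-x)x(x-y)$ and then exploiting the sign relation between $(x-y)$ and the remaining slack, or, if that fails, to revisit the upstream Cauchy–Schwarz step in the proof of Lemma~\ref{prop:2} to restore a tighter $\|u-w\|^2$-type remainder that controls the cross term. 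Once the intermediate regime is handled, gluing the three sign regions together yields the claimed nonnegativity.
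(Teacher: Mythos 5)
Your factorization is correct and is in fact the decisive observation: the left-hand side equals $(x-y)\bigl((a+b(1-x))x-ay\bigr)$. But it shows that the lemma as stated is \emph{false}, not merely that there is a delicate intermediate regime left to handle. For any $x\in(0,1)$ the two roots $y_1=x$ and $y_2=(a+b(1-x))x/a$ are distinct with $y_1<y_2$, and the product of the two factors is strictly negative for every $y\in(y_1,y_2)$ --- an interval that the hypotheses $y\ge 0$ and $x\ge 1-2a/b$ do nothing to exclude. Concretely, take $a=1$, $b=4$, $x=\tfrac12$ (the left endpoint of the admissible range) and $y=1$: the expression evaluates to $3\cdot\tfrac14+1-4\cdot\tfrac12=-\tfrac14<0$. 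Your two proposed rescues cannot succeed: completing the square as $a(x-y)^2+b(1-x)x(x-y)$ merely reproduces the same factorization $(x-y)\bigl(a(x-y)+b(1-x)x\bigr)$ and is negative on exactly the same window, and no additional hypothesis on $x$ alone can repair a claim quantified over all $y\ge0$.

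For comparison, the paper's own proof divides by $ay^2$, sets $c=b/a$, $r=x/y$, and minimizes the quadratic $(1+c(1-x))r^2-(2+c(1-x))r+1$ over $r\ge0$. Writing $u=c(1-x)\in[0,2]$, that minimum equals $1-\frac{(2+u)^2}{4(1+u)}=\frac{-u^2}{4(1+u)}$, which is $\le 0$ with equality only at $u=0$ (i.e.\ $x=1$); the paper's assertion that it is $\ge 0$ on $u\in[0,2]$ is an algebra error, so the published proof breaks down at exactly the point where your proposal stalls. What is actually true, and recoverable from either computation, is the quantitative lower bound $-b^2(1-x)^2x^2/(4a)$ for the left-hand side, a deficit of order $(1-x)^2$. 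Since Lemma~\ref{prop:2} only invokes Lemma~\ref{lem:333} for $\normm{v-v^*}/r$ close to $1$, the downstream argument can plausibly be repaired by absorbing this $O((1-x)^2)$ deficit into the other nonnegative contributions to $\Theta$, but that requires modifying the surrounding proof rather than proving the lemma as stated.
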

				\begin{proof}
					For $y=0$, this is true.
                    For $y>0$, divide both side by $ay^2$ and denote $c={b}/{a}$.
                    Then the lemma is equivalent to showing $(1+c(1-x))\left(x/y\right)^2-(2+c(1-x))x/y+1\geq 0$,
					i.e., it is enough to show $\min_{r\geq 0}\,(1+c(1-x))r^2-(2+c(1-x))r+1\geq 0$,
					when $x\in [1-{2}/{c},1]$. We have
					\begin{equation*}
						\arg\min_{r}\, (1+c(1-x))r^2-(2+c(1-x))r+1 = \frac{2+c(1-x)}{2+2c(1-x)},
					\end{equation*}
					and thus 
					\begin{equation*}
						\begin{aligned}
							&\min_{r\geq 0}\,(1+c(1-x))r^2-(2+c(1-x))r+1\\
							&\quad =(1+c(1-x))\left(\frac{2+c(1-x)}{2+2c(1-x)}\right)^2-(2+c(1-x))\frac{2+c(1-x)}{2+2c(1-x)}+1\\
							&\quad =-\frac{1}{2}\frac{(2+c(1-x))^2}{2+2c(1-x)}+1\geq 0,
						\end{aligned}
					\end{equation*}
					when $x\in [1-{2}/{c},1]$.
                    This finishes the proof.
				\end{proof}

\section{Numerical Experiments} \label{sec:numerics}
				
	In this section we numerically demonstrate the benefit of using CBO with truncated noise.
    For isotropic \cite{pinnau2017consensus,carrillo2018analytical,fornasier2021consensus} and anisotropic noise \cite{carrillo2019consensus,fornasier2021convergence},
	we compare the CBO method with truncation $M=1$ to standard CBO for several benchmark problems in optimization, which are summarized in Table~\ref{table:benchmark}.    
    \newcommand\xrowht[2][0]{\addstackgap[.5\dimexpr#2\relax]{\vphantom{#1}}}
    \begin{table}[H]
    	\centering
    	\begin{tabular}{cccc}
        	\toprule
        	\bf Name & \bf Objective function $f$  & $v^*$ & $\minobj$ \\ \midrule \xrowht{16pt}
        	Ackley & $-20 \exp \left(-0.2 \sqrt{\frac{1}{d} \sum_{i=1}^dv_i^2}\right)-\exp \left(\frac{1}{d} \sum_{i=1}^d \cos \left(2 \pi v_i\right)\right)+20+e$ & $(0, \ldots, 0)$ & 0 \\ \hline\xrowht{16pt}
        	Griewank & $1+\sum_{i=1}^d \frac{v_i^2}{4000}-\prod_{i=1}^d \cos \left(\frac{v_i}{i}\right)$ & $(0, \ldots, 0)$ & 0 \\ \hline\xrowht{16pt}
        	Rastrigin & $10 d+\sum_{i=1}^d\left[v_i^2-10 \cos \left(2 \pi v_i\right)\right]$ & $(0, \ldots, 0)$ & 0 \\ \hline\xrowht{16pt}
        	Alpine & $10 \sum_{i=1}^d\big\|\!\left(v_i-v_i^*\right) \sin \left(10\left(v_i-v_i^*\right)\right)-0.1\left(v_i-v_i^*\right)\big\|_2$ & $(0, \ldots, 0)$ & 0 \\ \hline\xrowht{16pt}
        	Salomon & $1-\cos \left(200 \pi \sqrt{\sum_{i=1}^dv_i^2}\right)+10 \sqrt{\sum_{i=1}^d v_i^2}$ & $(0, \ldots, 0)$ & 0 \\
       		\bottomrule
    	\end{tabular}
    	\caption{Benchmark test functions}
    	\label{table:benchmark}
	\end{table}

    \noindent
    \revised{In the subsequent tables we report comparison results for the two methods for the different benchmark functions as well as different numbers of particles $N$ and, potentially, different numbers of steps $K$.
    Throughout, we set $v_b=0$ and $R=\infty$, which is out of convenience. Any sufficiently large but finite choice for $R$ yields identical results.}
				
	The success criterion is defined by achieving the condition $\normm{\frac{1}{N}\sum_{i=1}^N V^i_{\revised{{K,\Delta t}}}-\globmin}\leq0.1$, \revised{which ensures that the algorithm has reached the basin of attraction of the global minimizer.
    The success rate is averaged over $1000$ runs.}

	\paragraph{Isotropic Case.}
        Let $d=15$.
		In the case of isotropic noise, we always set $\lambda=1$, $\sigma=0.3$, $\alpha=10^5$ and \revised{step-size} $\Delta t=0.02$.
        The initial positions $(V_0^i)_{i=1,\dots,N}$ are sampled i.i.d.\@ from $\rho_0=\mathcal{N}(0,I_d)$.
        In Table~\ref{tab:2} we report results comparing the isotropic CBO method with truncation $M=1$ and the original isotropic CBO method \cite{pinnau2017consensus,carrillo2018analytical,fornasier2021consensus} ($M=+\infty$) for the Ackley, Griewank and Salomon function.
        Each algorithm is run for $K=200$ steps.
				\begin{table}[H]\small
					\centering
						\begin{tabular}{c|c|ccccc}
						\toprule
							\multicolumn{7}{c}{\phantom{XXXXXXXXXXXXXXX}Number of steps $K=200$} \\ 
							\midrule \xrowht{10pt} 
							\bf Test function & $M$ & $N=150$ & $N=300$ & $N=600$ & $N=900$ & $N=1200$ \\ 
							\midrule
							\multirow{2}{*}{\text { Ackley }} & 1&0.978 & 0.999 & 1& 1 & 1 \\ 
							&$+\infty$&0.001 & 0.056 & 0.478 & 0.824 & 0.935 \\
							\hline 
							\multirow{2}{*}{\text { Griewank} } &1& 0.060 & 0.188& 0.5013 & 0.671 & 0.791 \\ 
							& $+\infty$&0& 0& 0.010 & 0.013& 0.032\\
							\hline 
							\multirow{2}{*}{Salomon} &1& 0.970 & 1& 1 & 1& 1\\ 
							& $+\infty$&0.005& 0.068& 0.603& 0.909& 0.979\\
							\bottomrule
						\end{tabular}
					\caption{For the $15$-dimensional Ackley and Salomon function, the CBO method with truncation ($M=1$) is able to locate the global minimum using only $N=300$ particles.
					In comparison, even with an larger number of particles (up to $N=1200$), the original CBO method ($M=+\infty$) cannot achieve a flawless success rate.
					In the case of the Griewank function, the original CBO method ($M=+\infty$) exhibits a quite low success rate, even when utilizing $N=1200$ particles. 
					Contrarily, in the same setting, the CBO method with truncation ($M=1$) achieves a success rate of $0.791$.}
					\label{tab:2}
				\end{table}

				\noindent
				Since the benchmark functions Rastrigin and Alpine are more challenging, we use more particles~$N$ and a larger number of steps~$K$, namely $K=200$ and $K=500$.
                We report the results in Table~\ref{tab:3}.
				\begin{table}[H]\small
					\centering
						\begin{tabular}{c|c|ccccc}
						\toprule
							\multicolumn{7}{c}{\phantom{XXXXXXXXXXXXXXX}Number of steps $K=200$} \\ 
							\midrule \xrowht{10pt} 
							\bf Test function & $M$ & $N=300$ & $N=600$ & $N=900$ & $N=1200$ & $N=1500$ \\ 
							\midrule
							\multirow{2}{*}{	\text { Rastrigin }} & 1&0.180 & 0.256 & 0.298& 0.322 & 0.337 \\
							&$+\infty$&0 & 0 & 0.004 & 0.004& 0.007 \\
							\hline
							\multirow{2}{*}{	\text { Alpine }} &1& 0.029 & 0.049& 0.051 & 0.070 & 0.080 \\
							& $+\infty$&0& 0.001& 0.004 & 0.004& 0.004\\
							\midrule
							\multicolumn{7}{c}{\phantom{XXXXXXXXXXXXXXX}Number of steps $K=500$} \\ 
							\midrule \xrowht{10pt} 
							\bf Test function & $M$ & $N=300$ & $N=600$ & $N=900$ & $N=1200$ & $N=1500$ \\ 
							\midrule
							\multirow{2}{*}{	\text { Rastrigin }} & 1&0.213 & 0.265 & 0.316& 0.326 & 0.343 \\
							&$+\infty$&0.001& 0.004 & 0.005 & 0.009 & 0.010 \\
							\hline
							\multirow{2}{*}{	\text { Alpine }} &1& 0.103 & 0.115& 0.147 & 0.165 & 0.173 \\
							& $+\infty$&0.010& 0.015& 0.033 & 0.037& 0.040\\
							\bottomrule
						\end{tabular}
					\caption{For the $15$-dimensional Rastrigin and Alpine function, both algorithms have difficulties in finding the global minimizer. However, the success rates for the  CBO method with truncation ($M=1$) are significantly higher compared to those of the original CBO method ($M=+\infty$).}
					\label{tab:3}
				\end{table}

	\paragraph{Anisotropic Case.}
	Let $d=20$.
    In the case of anisotropic noise, we set $\lambda=1,\sigma=5,\alpha=10^5$ and step-size $\Delta t=0.02$.
    The initial positions of the particles are initialized with $\rho_0=\mathcal{N}(0,100I_d)$.
    In Table~\ref{tab:4} we report results comparing the anisotropic CBO method with truncation $M=1$ and the original anisotropic CBO method \cite{carrillo2019consensus,fornasier2021convergence} ($M=+\infty$) for the Rastrigin, Ackley, Griewank and Salomon function.
    Each algorithm is run for $K=200$ steps.
				\begin{table}[H]\small
					\centering
						\begin{tabular}{c|c|ccccc}
						\toprule
							\multicolumn{7}{c}{\phantom{XXXXXXXXXXXXXXX}Number of steps $K=1000$} \\ 
							\midrule \xrowht{10pt} 
							\bf Test function & $M$ & $N=75$ & $N=150$ & $N=300$ & $N=600$ & $N=900$ \\ 
							\midrule
							\multirow{2}{*}{	\text { Rastrigin }} &1& 0.285 & 0.928&0.990&1&1\\
							& $+\infty$&0.728&0.952&0.993&1&1\\
							\hline
							\multirow{2}{*}{\text { Ackley }} & 1&0.510 & 0.997 & 1& 1 & 1 \\
							&$+\infty$&0.997 & 1 & 1 & 1 & 1 \\
							\hline 
							\multirow{2}{*}{\text { Griewank} } &1& 0.097&0.458 & 0.576& 0.625 & 0.665  \\
							& $+\infty$&0.093&0.101& 0.157& 0.159 & 0.167\\
							\hline 
							\multirow{2}{*}{	\text { Salomon }} &1& 0.010 & 0.434& 0.925 & 0.998& 1\\
							& $+\infty$&0.622& 0.954& 0.970& 0.934& 0.891\\
							\bottomrule
						\end{tabular}
					\caption{For the $20$-dimensional Rastrigin, Ackley and Salomon function, the original anisotropic CBO method ($M=+\infty$) works better than the anisotropic CBO method with truncation ($M=1$), in particular when the particle number~$N$ is small.
                    In the case of the Salomon function, when increasing the number of particle to $N=900$, the success rates of the original anisotropic CBO method ($M=+\infty$) decreases. In the case of the Griewank function, however, we find that the anisotropic CBO method with truncation ($M=+\infty$) works considerably better than the original anisotropic CBO method ($M=1$).}
					\label{tab:4}
				\end{table}
				
				\noindent
				Since the benchmark function Alpine is more challenging and none of the algorithms work in the previous setting,
				we reduce the dimensionality to $d=15$, choose $\sigma=1$, use $\rho_0=\mathcal{N}(0,I_d)$ to initialize, employ more particles and use a larger number of steps $K$, namely $K=200$, $K=500$ and $K=1000$.
                We report the results in Table~\ref{tab:5}.
				\begin{table}[H]\small
					\centering
						\begin{tabular}{c|c|ccccc}
						\toprule
							\multicolumn{7}{c}{\phantom{XXXXXXXXXXXXXXX}Number of steps $K=200$} \\ 
							\midrule \xrowht{10pt} 
							\bf Test function & $M$ & $N=300$ & $N=600$ & $N=900$ & $N=1200$ & $N=1500$ \\ 
							\midrule
							\multirow{2}{*}{	\text { Alpine }} &1& 0 & 0.006& 0.006 & 0.008 & 0.025 \\
							& $+\infty$&0.001& 0.004&0.008 & 0.007& 0.021\\
							\midrule
							\multicolumn{7}{c}{\phantom{XXXXXXXXXXXXXXX}Number of steps $K=500$} \\ 
							\midrule \xrowht{10pt} 
							\bf Test function & $M$ & $N=300$ & $N=600$ & $N=900$ & $N=1200$ & $N=1500$ \\ 
							\midrule
							\multirow{2}{*}{	\text { Alpine }} &1& 0.130 & 0.224& 0.291 & 0.336 & 0.365 \\
							& $+\infty$&0.083& 0.175& 0.250 & 0.292& 0.330\\
							\midrule
							\multicolumn{7}{c}{\phantom{XXXXXXXXXXXXXXX}Number of steps $K=1000$} \\ 
							\midrule \xrowht{10pt} 
							\bf Test function & $M$ & $N=300$ & $N=600$ & $N=900$ & $N=1200$ & $N=1500$ \\ 
							\midrule
							\multirow{2}{*}{	\text { Alpine }} &1& 0.102& 0.198& 0.293 & 0.340 & 0.368 \\
							& $+\infty$&0.097& 0.179& 0.250 & 0.295& 0.331\\
							\bottomrule
						\end{tabular}
					\caption{For the $15$-dimensional Alpine function, the anisotropic CBO method with truncated noise ($M=1$) works better than the original anisotropic CBO method ($M=+\infty$).}
					\label{tab:5}
				\end{table}
					
\section{Conclusions}
\label{sec:conclusions}
				
    In this paper we establish the convergence to a global minimizer of a potentially nonconvex and nonsmooth objective function for a variant of consensus-based optimization (CBO) which incorporates truncated noise.
    We observe that truncating the noise in CBO enhances the well-behavedness of the statistics of the law of the dynamics, which enables enhanced convergence performance and allows in particular for a wider flexibility in choosing the \revised{noise parameter} of the method, \revised{as we observe numerically.}
    For rigorously proving the convergence of the implementable algorithm to the global minimizer of the objective, we follow the route devised in \cite{fornasier2021consensus}.

\section*{Acknowledgements and Competing Interests}

    This work has been funded by the KAUST Baseline Research Scheme and the German Federal Ministry of Education and Research, and the Bavarian State Ministry for Science and the Arts.
    In addition to this, MF acknowledges the support of the Munich Center for Machine Learning.
    PR acknowledges the support of the Extreme Computing Research Center at KAUST.
    KR acknowledges the support of the Munich Center for Machine Learning and the financial support from the Technical University of Munich -- Institute for Ethics in Artificial Intelligence (IEAI).
    LS acknowledges the support of KAUST Optimization and Machine Learning Lab. 
    LS also thanks the hospitality of the Chair of Applied Numerical Analysis of the Technical University of Munich for discussions that contributed to the finalization of this work.
				
\bibliographystyle{abbrv}
\bibliography{biblio}
				
\end{document}